\documentclass[12pt]{amsart}
\usepackage{amsfonts,amssymb,amscd,amsmath,enumerate,verbatim}

%
%
%
\def\NZQ{\mathbb}               

\def\ZZ{{\NZQ Z}}
\def\RR{{\NZQ R}}

%
%
\def\frk{\mathfrak}               

\def\Phi{{\frk N}}
%
%

\def\eb{{\bold e}}


%
\def\opn#1#2{\def#1{\operatorname{#2}}} 
%
\opn\chara{char} 
\opn\length{\ell} 
\opn\pd{pd} 
\opn\rk{rk}
\opn\projdim{proj\,dim} 
\opn\injdim{inj\,dim} 
\opn\rank{rank}
\opn\depth{depth} 
\opn\grade{grade} 
\opn\height{height}
\opn\embdim{emb\,dim} 
\opn\codim{codim}

\opn\Tr{Tr} 
\opn\bigrank{big\,rank}
\opn\superheight{superheight}
\opn\lcm{lcm}
\opn\trdeg{tr\,deg}
\opn\reg{reg} 
\opn\lreg{lreg} 
\opn\ini{in} 
\opn\lpd{lpd}
\opn\size{size}
\opn\mult{mult}
\opn\dist{dist}
\opn\cone{cone}
\opn\lex{lex}
\opn\rev{rev}
%
\opn\div{div} \opn\Div{Div} \opn\cl{cl} \opn\Cl{Cl}
%
%
\opn\Spec{Spec} \opn\Supp{Supp} \opn\supp{supp} \opn\Sing{Sing}
\opn\Ass{Ass} \opn\Min{Min}
%
%
\opn\Ann{Ann} \opn\Rad{Rad} \opn\Soc{Soc}
%
%
\opn\Syz{Syz} \opn\Im{Im} \opn\Ker{Ker} \opn\Coker{Coker}
\opn\Am{Am} \opn\Hom{Hom} \opn\Tor{Tor} \opn\Ext{Ext}
\opn\End{End} \opn\Aut{Aut} \opn\id{id} \opn\ini{in}

\opn\nat{nat}
\opn\pff{pf}
\opn\Pf{Pf} \opn\GL{GL} \opn\SL{SL} \opn\mod{mod} \opn\ord{ord}
\opn\Gin{Gin}
\opn\Hilb{Hilb}\opn\adeg{adeg}\opn\std{std}\opn\ip{infpt}
\opn\Pol{Pol}
\opn\sat{sat}
\opn\Var{Var}
\opn\Gen{Gen}

%
%
\opn\aff{aff} \opn\con{conv} \opn\relint{relint} \opn\st{st}
\opn\lk{lk} \opn\cn{cn} \opn\core{core} \opn\vol{vol}
\opn\link{link} \opn\star{star}
\opn\gr{gr}


%
%

\def\pot#1#2{#1[\kern-0.28ex[#2]\kern-0.28ex]}

%
%
\opn\dirlim{\underrightarrow{\lim}}
\opn\inivlim{\underleftarrow{\lim}}
%
%
%

%
%

\def\Implies{\ifmmode\Longrightarrow \else
	\unskip${}\Longrightarrow{}$\ignorespaces\fi}
\def\implies{\ifmmode\Rightarrow \else
	\unskip${}\Rightarrow{}$\ignorespaces\fi}
\def\iff{\ifmmode\Longleftrightarrow \else
	\unskip${}\Longleftrightarrow{}$\ignorespaces\fi}

\let\:=\colon
\newtheorem{Theorem}{Theorem}[section]
\newtheorem{Lemma}[Theorem]{Lemma}
\newtheorem{Corollary}[Theorem]{Corollary}
\newtheorem{Proposition}[Theorem]{Proposition}

\theoremstyle{remark}
\newtheorem{Remark}[Theorem]{Remark}
%
%
\let\epsilon\varepsilon
\let\phi=\varphi
\let\kappa=\varkappa
%
%
\textwidth=15cm \textheight=22cm \topmargin=0.5cm
\oddsidemargin=0.5cm \evensidemargin=0.5cm \pagestyle{plain}
%
%
\def\qed{\ifhmode\textqed\fi
	\ifmmode\ifinner\quad\qedsymbol\else\dispqed\fi\fi}
\def\textqed{\unskip\nobreak\penalty50
	\hskip2em\hbox{}\nobreak\hfil\qedsymbol
	\parfillskip=0pt \finalhyphendemerits=0}
\def\dispqed{\rlap{\qquad\qedsymbol}}

%
\opn\dis{dis}
\opn\height{height}
\opn\dist{dist}
\def\pnt{{\raise0.5mm\hbox{\large\bf.}}}

\opn\Lex{Lex}

\begin{document}
\title{Existence of regular unimodular triangulations of dilated empty simplices}
\author{Takayuki Hibi, Akihiro Higashitani and Koutarou Yoshida}
\thanks{
{\bf 2010 Mathematics Subject Classification:}
Primary 52B20; Secondary 52B11. \\
\, \, \, {\bf Keywords:}
regular unimodular triangulation, empty simplex, integer decomposition property, toric ideal, Gr\"obner basis \\
The second author is partially supported by JSPS Grant-in-Aid for Young Scientists (B) $\sharp$17K14177. }

\address{Takayuki Hibi,
Department of Pure and Applied Mathematics,
Graduate School of Information Science and Technology, Osaka University,
Suita, Osaka 565-0871, Japan}
\email{hibi@math.sci.osaka-u.ac.jp}
\address{Akihiro Higashitani,
Department of Mathematics, Kyoto Sangyo University, 
Motoyama, Kamigamo, Kita-Ku, Kyoto, Japan, 603-8555}
\email{ahigashi@cc.kyoto-su.ac.jp}
\address{Koutarou Yoshida, 
Department of Pure and Applied Mathematics, Graduate School of Information Science and Technology, 
Osaka University, Suita, Osaka 565-0871, Japan} 
\email{yoshidakiiciva08@yahoo.co.jp}

\maketitle

\begin{abstract}
Given integers $k$ and $m$ with $k \geq 2$ and $m \geq 2$, 
let $P$ be an empty simplex of dimension $(2k-1)$ whose $\delta$-polynomial is of the form $1+(m-1)t^k$. 
In the present paper, the necessary and sufficient condition for the $k$-th dilation $kP$ of $P$ 
to have a regular unimodular triangulation will be presented. 
\end{abstract}

\section{Introduction}

\subsection{Integral convex polytopes and $\delta$-polynomials} 
An {\em integral} convex polytope is a convex polytope whose vertices are integer points. 
For an integral convex polytope $P \subset \RR^d$ of dimension $d$, 
we consider the generating function $\sum_{n \geq 0}|n P \cap \ZZ^d|t^n$, where $nP=\{n \alpha \;|\; \alpha \in P\}$. 
Then it is well-known that this becomes a rational function of the form 
$$\sum_{n \geq 0}|n P \cap \ZZ^d|t^n=\frac{\delta_P(t)}{(1-t)^{d+1}},$$
where $\delta_P(t)$ is a polynomial in $t$ of degree at most $d$ with nonnegative integer coefficients. 
The polynomial $\delta_P(t)$ is called the {\em $\delta$-polynomial}, also known as the {\em (Ehrhart) $h^*$-polynomial} of $P$. 
For more details on $\delta$-polynomials of integral convex polytopes, please refer to \cite{BeckRobins} or \cite{HibiBook}.

\subsection{Empty simplices} 
An integral simplex $P \subset \RR^d$ is called {\em empty} if $P$ contains no integer point except for its vertices. 
Note that $P$ is an empty simplex if and only if the linear term of $\delta_P(t)$ vanishes. 
Empty simplices are of particular interest in the area of not only combinatorics on integral convex polytopes but also toric geometry. 
Especially, the characterization problem of empty simplices is one of the most important topics. 
Originally, the empty simplices of dimension $3$ were completely characterized by G. K. White (\cite{White}). 
Note that the $\delta$-polynomial of every empty simplex of dimension $3$ is of the form $1+(m-1)t^2$ for some positive integer $m$. 
Recently, this characterization of empty simplices has been generalized by Batyrev--Hofscheier \cite{BH1}. 
More precisely, the following theorem has been proved. 
\begin{Theorem}[{cf. \cite[Theorem 2.5]{BH1}}]\label{empty}
Given an integer $k \geq 2$, let $d=2k-1$. 
Then $P \subset \RR^d$ is an empty simplex of dimension $d$ whose $\delta$-polynomial is of the form $1+(m-1)t^k$ for some $m \geq 2$ 
if and only if there are integers $a_1,\ldots,a_{k-1}$ with $1 \leq a_i \leq m/2$ and $(a_i,m)=1$ for each $1 \leq i \leq k-1$ 
such that $P$ is unimodularly equivalent to the convex hull of 
\begin{align}\label{abm}
\left\{{\bf 0},\eb_1,\ldots,\eb_{d-1},\sum_{i=1}^{k-1}a_i\eb_i+\sum_{j=k}^{d-1}(m-a_{d-j})\eb_j+m\eb_d\right\}.
\end{align}
\end{Theorem}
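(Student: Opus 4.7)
The plan is to prove the two directions separately, with the ``only if'' direction being substantially harder.

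For the \textbf{sufficiency}, I would work directly with the half-open fundamental parallelepiped of the cone $\mathbb{R}_{\ge 0}(P \times \{1\}) \subset \mathbb{R}^{d+1}$. Its lattice points are in bijection with $\{0, 1, \ldots, m-1\}$: the point at index $j$ corresponds to $\lambda_d = j/m$, $\lambda_i = \{-j w_i / m\}$ for $i = 1, \ldots, d-1$ (where $w_i$ is the $i$-th coordinate of the apex), and $\lambda_0$ chosen to make the height integral. The palindromic structure $w_i + w_{d-i} = m$ for $i = 1,\ldots,k-1$ together with $\gcd(a_i, m) = 1$ forces the identity $\{-j w_i / m\} + \{-j w_{d-i}/m\} = 1$ for every $j = 1, \ldots, m-1$, and a direct summation yields $\sum_{i=1}^{d-1} \{-j w_i/m\} = k-1$, so the height equals $k$ and $\delta_P(t) = 1 + (m-1)t^k$. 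Emptiness is immediate from the absence of lattice points at heights $1, \ldots, k-1$.

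For the \textbf{necessity}, I would first place $P$ in a standard form. Since every edge of an empty simplex is primitive, iterated Hermite normal form reductions allow us to assume (after an affine unimodular map, possibly relabelling vertices) that $v_0 = 0$, $v_i = e_i$ for $1 \le i \le d-1$, and $v_d = (b_1, \ldots, b_{d-1}, m) =: w$ with $0 \le b_i < m$ and $m = \delta_P(1) \ge 2$. The group $\mathbb{Z}^d/(\mathbb{Z}e_1 + \cdots + \mathbb{Z}e_{d-1} + \mathbb{Z}w) \cong \mathbb{Z}/m\mathbb{Z}$ is cyclic, and the non-identity lattice points of the fundamental parallelepiped are parametrized by $j = 1, \ldots, m-1$ at heights
\[
h(j) \;=\; \left\lceil \frac{j}{m} + \sum_{i=1}^{d-1} \left\{-\frac{jb_i}{m}\right\}\right\rceil.
\]
The hypothesis $\delta_P(t) = 1 + (m-1)t^k$ is equivalent to $h(j) = k$ for every $j \in \{1, \ldots, m-1\}$. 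I would first show $\gcd(b_i, m) = 1$ for every $i$: if some $g_i = \gcd(b_i, m) > 1$, then testing $j = m/g_i$ (which makes $\lambda_i = 0$) together with the height-$k$ constraint for $j = 1$ leads to a contradiction with $k \le d = 2k-1$. Once coprimality is established, each map $j \mapsto \{-j b_i/m\}$ is a bijection onto $\{1/m, \ldots, (m-1)/m\}$, and the involution $j \leftrightarrow m - j$ sends $\{-j b_i/m\}$ to $1 - \{-j b_i/m\}$; combining $h(j) = h(m-j) = k$ (which adds to $d+1 = 2k$) with individual height bounds forces the multiset $\{b_1, \ldots, b_{d-1}\}$ to split into $k-1$ unordered pairs summing to $m$.

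The principal obstacle is the final pairing step: although the pairs-summing-to-$m$ structure of $\{b_1, \ldots, b_{d-1}\}$ is natural, re-indexing them so that the pair partners are precisely $(b_i, b_{d-i})$ and each $a_i = \min(b_i, m-b_i)$ lies in $\{1, \ldots, \lfloor m/2 \rfloor\}$ requires using the unimodular symmetries of the reduced form, namely permutations of $e_1, \ldots, e_{d-1}$ and the ``origin swap'' $v_0 \leftrightarrow v_i$ (which replaces $b_i$ by $m - b_i$ after renormalisation). I expect the cleanest route is to view the $b_i$'s as determining a character of $\mathbb{Z}/m\mathbb{Z}$ and re-interpret the height condition as a constraint on fractional-part sums, from which the pairing emerges by a combinatorial symmetry argument; then a final permutation of coordinates produces the indexing of the statement.
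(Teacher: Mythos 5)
The paper does not prove this theorem itself: it is quoted verbatim from Batyrev--Hofscheier \cite{BH1} (Theorem~2.5), so there is no internal proof to compare yours against. Evaluating your sketch on its own merits: the sufficiency direction is essentially correct. The fundamental-parallelepiped parametrisation $\lambda_d=j/m$, $\lambda_i=\{-jw_i/m\}$ is right, the palindromic identity $\{-jw_i/m\}+\{-jw_{d-i}/m\}=1$ (for $1\le j\le m-1$, using $\gcd(a_i,m)=1$) gives $\sum_{i=1}^{d-1}\lambda_i=k-1$, and the height is forced to be $k$. That part is fine.

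The necessity direction has a genuine gap, and it is not where you think it is. You write that ``combining $h(j)=h(m-j)=k$ with individual height bounds forces the multiset $\{b_1,\ldots,b_{d-1}\}$ to split into $k-1$ unordered pairs summing to $m$.'' This is false. The identity $h(j)+h(m-j)=2k=d+1$, being a sum of $d+1$ terms each equal to $0$ or $1$, forces every $\lambda_i(j)+\lambda_i(m-j)$ to equal $1$; this yields exactly coprimality $\gcd(b_i,m)=1$ for each $i$ and nothing more. It does \emph{not} constrain the $b_i$ against one another. A concrete counterexample: $k=2$, $m=5$, $(b_1,b_2)=(1,2)$. One checks $h(j)=2$ for $j=1,2,3,4$, so this is a valid empty simplex with $\delta$-polynomial $1+4t^2$, yet $b_1+b_2=3\neq 5$. (The simplex \emph{is} unimodularly equivalent to the required form $(2,3,5)$, but only after a vertex swap that changes the $b_i$'s; the pairing lives on the full residue vector $(r_0,\ldots,r_d)$ of all $d+1$ barycentric weights, including those attached to $v_0$ and $v_d$, not on $(b_1,\ldots,b_{d-1})$.) The fact that the constant-height condition $\sum_{i=0}^{d}\{jr_i/m\}=k$ for all $j=1,\ldots,m-1$ forces the $r_i$'s to pair into $k$ pairs summing to $0\pmod m$ is the genuine heart of the Batyrev--Hofscheier theorem; it rests on a nontrivial arithmetic argument (in the tradition of White, Morrison--Stevens, Batyrev) about constant sums of fractional parts, and a ``combinatorial symmetry argument'' does not produce it. Separately, your reduction to the shape $v_0=0$, $v_i=e_i$, $v_d=(b_1,\ldots,b_{d-1},m)$ tacitly requires $\ZZ^d/(\ZZ v_1+\cdots+\ZZ v_d)$ to be cyclic; primitivity of edges alone does not grant this, and it too must be extracted from the $\delta$-polynomial hypothesis.
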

Here $(a,b)$ denotes the greatest common divisor of two positive integers $a$ and $b$, 
$\eb_1,\ldots,\eb_d \in \RR^d$ are the unit coordinate vectors of $\RR^d$ and ${\bf 0} \in \RR^d$ is the origin. 

Given integers $a_1,\ldots,a_{k-1},m$ with $1 \leq a_i \leq m/2$ and $(a_i,m)=1$ for each $i$, 
let $P(a_1,\ldots,a_{k-1},m)$ denote the convex hull of \eqref{abm}.

\subsection{The integer decomposition property and unimodular triangulations} 
We say that an integral convex polytope $P \subset \RR^d$ has the {\em integer decomposition property} (IDP, for short) 
if for each integer $n \geq 1$ and for each $\gamma \in nP \cap \ZZ^d$, 
there exist $\gamma^{(1)},\ldots,\gamma^{(n)}$ belonging to $P \cap \ZZ^d$ such that $\gamma=\gamma^{(1)}+\cdots+\gamma^{(n)}$. 

Under the assumption that the affine lattice generated by $P \cap \ZZ^d$ is equal to the whole lattice $\ZZ^d$, 
the following implications for integral convex polytopes hold: 
\begin{center}
a regular unimodular triangulation $\Rightarrow$ a unimodular triangulation 

$\Rightarrow$ a unimodular covering $\Rightarrow$ IDP 
\end{center}
(Please refer the reader to \cite{Stur} for the notions of (regular) unimodular triangulation or unimodular covering.) 
Note that for each implication, there exists an example of an integral convex polytope not satisfying the converse 
(see \cite{OH99}, \cite{FZ99} and \cite{BG99}).

\subsection{Motivation and results} 
For any integral convex polytope $P$ of dimension $d$, we know by \cite[Theorem 1.3.3]{BGT} that 
$nP$ always has IDP for every $n \geq d-1$. 
Moreover, we also know by \cite[Theorem 1.3.1]{BGT} that there exists a constant $n_0$ such that 
$nP$ has a unimodular covering for every $n \geq n_0$. 
However, it is still open whether there really exists a constant $n_0$ such that 
$nP$ has a (regular) unimodular triangulation for every $n \geq n_0$, 
while it is only known that there exists a constant $c$ such that $cP$ has a unimodular triangulation (\cite[Theorem 4.1 (p. 161)]{KKMS}). 

On the other hand, it is proved in \cite{KS} and \cite{SZ14} that for any $3$-dimensional integral convex polytope $P$, 
$nP$ has a unimodular triangulation for $n=4$ (\cite{KS}) and every $n \geq 6$ (\cite[Theorem 1.4]{SZ14}). 
For the proofs of those results, the discussions about the existence of a (regular) unimodular triangulation 
of the {\em dilated} empty simplices of dimension $3$ are crucial, where 
for an empty simplex $P$, a dilated empty simplex means a simplex $nP$ for some positive integer $n$. 
Hence, for the further investigation of higher-dimensional cases, the existence of a unimodular triangulation of 
dilated empty simplices might be important. 
Since $P(a_1,\ldots,a_{k-1},m)$ can be understood as a generalization of empty simplices of dimension $3$, 
it is quite reasonable to study the existence of a unimodular triangulation of the dilated empty simplex. 
Moreover, since $kP(a_1,\ldots,a_{k-1},m)$ has IDP but $k'P(a_1,\ldots,a_{k-1},m)$ does not for any $k'<k$ (see Proposition \ref{prop:IDP}), 
it is natural to discuss the existence of a unimodular triangulation of $kP(a_1,\ldots,a_{k-1},m)$. 

The purpose of the present paper is to show the following: 
\begin{Theorem}[Main Theorem]\label{mainthm}
Given an integer $k \geq 2$, let $d=2k-1$ and let $P \subset \RR^d$ be an empty simplex 
whose $\delta$-polynomial is of the form $1+(m-1)t^k$ for some $m \geq 2$. 
Then $kP$ has a regular unimodular triangulation if and only if $P$ is unimodularly equivalent to the convex hull of 
$$\left\{{\bf 0},\eb_1,\ldots,\eb_{d-1},\sum_{i=1}^{k-1}\eb_i+(m-1)\sum_{j=k}^{d-1}\eb_j+m\eb_d\right\}.$$ 
\end{Theorem}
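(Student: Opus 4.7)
By Theorem \ref{empty}, after a unimodular transformation we may assume $P=P(a_1,\ldots,a_{k-1},m)$ with $1\le a_i\le m/2$ and $(a_i,m)=1$, so the statement becomes: $kP$ admits a regular unimodular triangulation if and only if $a_1=\cdots=a_{k-1}=1$. The plan is to translate the problem into commutative algebra via Sturmfels' correspondence \cite{Stur}: regular unimodular triangulations of $kP$ correspond bijectively to monomial orders $<$ for which the toric ideal $I_{kP}\subset K[x_\gamma:\gamma\in kP\cap\ZZ^d]$ has a squarefree initial ideal $\ini_<(I_{kP})$. Both directions of the theorem then reduce to assertions about the binomial structure of $I_{kP}$.

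A preliminary step I would carry out is to describe $kP\cap\ZZ^d$ explicitly. Writing a lattice point in barycentric coordinates $(b_0,b_1,\ldots,b_{d-1},b_d)$ with respect to $\mathbf{0},\eb_1,\ldots,\eb_{d-1},v_d$, the condition $\sum b_i=k$ together with the congruences imposed by the specific shape of $v_d$ forces a very constrained parameterization depending only on the residues $a_i\bmod m$. The total count $|kP\cap\ZZ^d|$ read off from $\delta_P(t)=1+(m-1)t^k$ provides the cross-check that no lattice point is missed, and in particular pinpoints the $m-1$ interior lattice points of $kP$ that "remember" the values $a_i$.

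For the "if" direction, when $a_1=\cdots=a_{k-1}=1$ the vertex $v_d=\sum_{i=1}^{k-1}\eb_i+(m-1)\sum_{j=k}^{d-1}\eb_j+m\eb_d$ is symmetric enough to admit an explicit list of quadratic "straightening" binomials among the parameterized lattice points. I would order the variables by height $b_d$ and then by residue, and argue (for example, by Buchberger's criterion applied to the short list of S-pairs that arise from the symmetric structure) that these binomials form a Gr\"obner basis of $I_{kP}$ under a reverse lexicographic order placing the $m-1$ interior points last; their leading monomials are manifestly squarefree, which by \cite{Stur} delivers the desired regular unimodular triangulation.

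The "only if" direction is the main obstacle. Here I would aim to isolate, whenever some $a_i\ne 1$, a circuit (a binomial of minimal support) in $I_{kP}$ built from those interior lattice points of $kP$ whose coordinates genuinely encode $a_i\pmod m$. The goal is to show that both sides of this circuit are necessarily non-squarefree, so that every monomial order produces a non-squarefree initial term. The delicate step is to verify that the non-squarefreeness cannot be resolved by reduction modulo any other element of $I_{kP}$; equivalently, that the circuit in question is a minimal element of the universal Gr\"obner basis. This last point is where I expect the real technical work to lie, since it requires a systematic combinatorial analysis of all possible decompositions of the offending lattice points as sums of other elements of $kP\cap\ZZ^d$, and it is likely the step for which the condition $a_i\le m/2$ in Theorem \ref{empty} is essential.
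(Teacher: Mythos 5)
Your high-level framework is right and matches the paper: reduce via Theorem \ref{empty} to $P=P(a_1,\ldots,a_{k-1},m)$, translate to the toric ideal of $kP$, and use Sturmfels' criterion that a regular unimodular triangulation is equivalent to a squarefree initial ideal for some monomial order. However, both directions of your plan contain genuine gaps.

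For the \emph{sufficiency} ($a_1=\cdots=a_{k-1}=1$), you claim an ``explicit list of quadratic straightening binomials'' and a reverse lexicographic order will give a squarefree Gr\"obner basis. This underestimates the problem: the Gr\"obner basis the paper constructs contains, in addition to quadratic relations of sorting type and quadratic relations among the $y_j$, families $G_3,\ldots,G_k$ of binomials of degree $3,\ldots,k$, of the form $x_{{\bf s}_1}\cdots x_{{\bf s}_n} - x_{{\bf t}_1}\cdots x_{{\bf t}_{n-1}}y_p$. These higher-degree relations are forced by the interaction between the ``ordinary'' lattice points (sums of $k$ vertices) and the interior points $w_j$; a purely quadratic basis is not exhibited in the paper and is not obviously available. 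You would also need to specify the order very carefully --- the paper uses a three-layer order (degree in the $x$'s, then a sorting order on the $x$'s, then lex on the $y$'s) and verifies Gr\"obnerness via the Ohsugi--Hibi standard-monomial criterion rather than a bare Buchberger computation, precisely to manage these higher-degree generators. Your outline would fail at the point where an S-pair of a quadratic $xy$-relation with another $x$-relation reduces to a nonzero cubic not in your proposed generating set.

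For the \emph{necessity}, your plan is to exhibit a single circuit with \emph{both} monomials non-squarefree, so that any order has non-squarefree initial term, and then argue this circuit is in the universal Gr\"obner basis. This is a genuinely different --- and, I believe, unworkable --- strategy from the paper's. The natural circuits one finds (e.g.\ relations of the form $x_J\,y_{i-1}y_{i+1} - x_{J'}\,y_i^2$ coming from the arithmetic of the $w_j$'s, as in Lemmas \ref{lem:yoshida1} and \ref{lem:yoshida2}) always have one \emph{squarefree} side; there is no obvious reason a both-sides-non-squarefree circuit should exist, and the paper does not produce one. What the paper does instead is subtler and inherently non-local: Lemma \ref{lem:yoshida3} shows the non-squarefree monomial $x_{J'}y_i^2$ cannot be divisible by any quadratic initial monomial, which, \emph{assuming} a squarefree initial ideal exists, forces the squarefree side $x_Jy_{i-1}y_{i+1}$ to be the leading term. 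This gives the inequality $x_{J'}y_i^2 < x_J y_{i-1}y_{i+1}$, and pairing each such relation with its mirror (indices $m-i$) yields $y_iy_{m-i} < y_jy_{m-j}$ for some other $j$. Using the relation from Lemma \ref{lem:yoshida2}, which ties the index $1$ back into the chain via the multiplier $a^{\prime}$ with $\overline{aa^{\prime}}=1$, one closes a \emph{cycle} of such strict inequalities among the finitely many products $y_iy_{m-i}$ --- an impossibility. Your proposal has no analogue of this global ``chain and close a cycle'' step, and without it the existence of one or several bad-looking binomials in $I$ does not by itself rule out a squarefree initial ideal. Finally, the normalization $a_i\le m/2$ from Theorem \ref{empty} is not what makes the argument work; it is just a choice of representative (the necessity is proved under the hypothesis that some $a_j$ lies in $\{2,\ldots,m-2\}$, i.e.\ some $a_j\ne 1$ after normalization).
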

\begin{Remark}
This theorem can be regarded as a generalization of (a part of) \cite[Corollary 3.5]{SZ14}. 
\end{Remark}

Theorem \ref{mainthm} says that $P$ is an empty simplex with $\delta_P(t)=1+(m-1)t^k$ for some $m \geq 2$ 
whose $k$-th dilation has a regular unimodular triangulation if and only if $P$ is unimodularly equivalent to $P(1,\ldots,1,m)$. 
The proof of the necessity of Theorem \ref{mainthm} is given in Section \ref{nec} (Proposition \ref{prop:nec}) and 
the sufficiency is given in Section \ref{suf} (Proposition \ref{prop:suf}), respectively. 

Furthermore, by Proposition \ref{prop:IDP} together with Theorem \ref{mainthm}, we obtain 
\begin{Corollary}
For any integer $k \geq 2$, there exists an empty simplex $P$ of dimension $(2k-1)$ 
such that $kP$ has IDP but has no regular unimodular triangulation. 
\end{Corollary}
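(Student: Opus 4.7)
The plan is direct: combine Proposition \ref{prop:IDP} with the ``only if'' direction of Theorem \ref{mainthm}. For each fixed $k \ge 2$, it suffices to produce a single empty simplex $P$ of dimension $2k-1$, of the form $P(a_1,\ldots,a_{k-1},m)$, that is \emph{not} unimodularly equivalent to $P(1,\ldots,1,m)$. Once such a $P$ is in hand, Proposition \ref{prop:IDP} immediately supplies IDP for $kP$, while Theorem \ref{mainthm} rules out a regular unimodular triangulation of $kP$.

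A uniform choice that works for every $k \ge 2$ is $m = 5$ with $a_1 = \cdots = a_{k-1} = 2$. The constraints $1 \le a_i \le m/2$ and $(a_i, m) = 1$ are trivially satisfied, so Theorem \ref{empty} produces the empty simplex $P := P(2,\ldots,2,5) \subset \RR^{2k-1}$, whose $\delta$-polynomial is $1 + 4t^k$. Proposition \ref{prop:IDP} then applies verbatim to yield that $kP$ has the integer decomposition property.

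The sole remaining --- and principal --- obstacle is the verification that $P$ is not unimodularly equivalent to $P(1,\ldots,1,5)$, since Theorem \ref{empty} is phrased only as an existence parametrization and does not by itself preclude different tuples $(a_1,\ldots,a_{k-1})$ from giving equivalent simplices. I would dispatch this by exhibiting a unimodular invariant that separates the two classes. In the base case $k = 2$, this invariant is provided by the classical classification of empty lattice tetrahedra (White): the class of $P(a,m)$ is determined by the unordered pair $\{a,\, m-a\}$ modulo $m$, and for $m=5$ the pairs $\{1,4\}$ and $\{2,3\}$ are plainly distinct. For general $k \ge 2$, the corresponding invariant is the multiset $\{a_1,\ldots,a_{k-1}\}$ taken modulo the involutions $a_i \mapsto m - a_i$, as furnished by the refined Batyrev--Hofscheier classification; no such symmetry transports $(2,\ldots,2)$ to $(1,\ldots,1)$ modulo $5$, so the two simplices are genuinely inequivalent. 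Once this non-equivalence is secured, the ``only if'' half of Theorem \ref{mainthm} forbids $kP$ from admitting a regular unimodular triangulation, and the corollary follows.
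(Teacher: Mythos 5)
Your example $P = P(2,\ldots,2,5)$ is a valid choice, and the conclusion you reach is correct, but the route through Theorem~\ref{mainthm} plus a unimodular-inequivalence argument is more roundabout than the paper intends, and the inequivalence step is where your proposal has an actual gap. To invoke the ``only if'' direction of Theorem~\ref{mainthm} you must certify that $P(2,\ldots,2,5)$ is not unimodularly equivalent to $P(1,\ldots,1,5)$, and for that you appeal to ``the refined Batyrev--Hofscheier classification'' asserting that the multiset $\{a_1,\ldots,a_{k-1}\}$ modulo the involutions $a_i\mapsto m-a_i$ is a complete invariant. The paper never states or proves such a classification; Theorem~\ref{empty} is cited only as an existence statement, exactly as you note. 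So as written, the crucial inequivalence claim rests on an external result you have not supplied.

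The gap is easily closed, because the paper's own setup makes the inequivalence check superfluous. Proposition~\ref{prop:nec} is stated directly in terms of the parameters: if $P = P(a_1,\ldots,a_{k-1},m)$ and some $a_j$ satisfies $2 \le a_j \le m-2$, then $kP$ has no regular unimodular triangulation. That is precisely the situation for $P(2,\ldots,2,5)$ (or even more simply $P(2,1,\ldots,1,5)$), since $2 \le 2 \le 3$. Combined with Proposition~\ref{prop:IDP}, which gives IDP for $kP$, the corollary follows immediately with no mention of unimodular equivalence classes at all. I'd recommend citing Proposition~\ref{prop:nec} rather than Theorem~\ref{mainthm} for the non-triangulation half, which both removes the dependency on an unproved classification and matches how the paper's argument is actually structured.
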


\bigskip

\section{Integer decomposition property of $P(a_1,\ldots,a_{k-1},m)$} 

Before proving the main theorem (Theorem \ref{mainthm}), we prove the following: 
\begin{Proposition}\label{prop:IDP}
Let $P=P(a_1,\ldots,a_{k-1},m)$, where $a_1,\ldots,a_{k-1}$, $m$ are integers with $1 \leq a_i \leq m/2$ and $(a_i,m)=1$ for each $i$. 
Then $nP$ has IDP for a positive integer $n$ if and only if $n \geq k$. 
\end{Proposition}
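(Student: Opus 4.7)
The approach is to pass to the cone $C(P) \subset \RR^{d+1}$ over $\{1\} \times P$, graded by the first coordinate (``height''), and to work inside its affine semigroup $S := C(P) \cap \ZZ^{d+1}$. IDP for $nP$ is equivalent to the Veronese subsemigroup $\bigoplus_{n' \geq 0} S_{nn'}$ being generated in its degree-one component $S_n$. I would first pin down the structure of $S$: write $v_0 := (1, {\bf 0})$, $v_j := (1, \eb_j)$ for $1 \leq j \leq d-1$, and $v_d := (1, v)$ where $v$ is the last vertex of $P$. The hypothesis $\delta_P(t) = 1 + (m-1)t^k$, together with the standard dictionary between $\delta$-polynomial coefficients and lattice points of the half-open fundamental parallelepiped of $v_0, \ldots, v_d$, shows that this parallelepiped contains the origin at height $0$ and exactly $m - 1$ additional lattice points $w_1, \ldots, w_{m-1}$, all at height $k$. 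Consequently every $\gamma \in S$ admits a unique decomposition $\gamma = p + \sum_{i=0}^{d} \mu_i v_i$ with $p \in \{{\bf 0}, w_1, \ldots, w_{m-1}\}$ and $\mu_i \in \ZZ_{\geq 0}$.

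For the ``only if'' direction, assume $n < k$. Then every element of $S_n$ has parallelepiped part ${\bf 0}$ (since each $w_r$ lies at height $k > n$), so $S_n$ consists entirely of nonnegative integer combinations of the $v_i$, and the same is true of every $n'$-fold sum of such elements. Choosing $n' := \lceil k/n \rceil \geq 2$, the element $\gamma := w_1 + (nn' - k)v_0 \in S_{nn'}$ has parallelepiped part $w_1 \neq {\bf 0}$ and therefore cannot be so expressed, so $nP$ fails IDP. Conversely, assume $n \geq k$ and let $\gamma = p + \sum \mu_i v_i \in S_{nn'}$. If $p = {\bf 0}$, partition the multiset consisting of $\mu_i$ copies of $v_i$ (for each $i$) into $n'$ blocks of total size $n$; each block lies in $S_n$. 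If $p = w_r$, peel off one block consisting of $w_r$ together with any $n - k \geq 0$ of the $v_i$'s (possible because $\sum \mu_i = nn' - k \geq n - k$), which lies in $S_n$ at height $k + (n-k) = n$, and partition the remaining $n(n'-1)$ copies of $v_i$'s into $n' - 1$ blocks of size $n$. Either way $\gamma$ is exhibited as a sum of $n'$ elements of $S_n$.

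The main obstacle is the structural fact that all non-origin lattice points of the half-open fundamental parallelepiped lie at the single height $k$; this is precisely the content of the hypothesis $\delta_P(t) = 1 + (m-1)t^k$ read through the $h^*$/parallelepiped correspondence, and no further computation involving the specific coordinates $(a_1, \ldots, a_{k-1})$ of the last vertex is needed. Once this structural input is in hand, the rest is elementary combinatorics of distributing height-$1$ and height-$k$ generators into blocks of height $n$, and the threshold $n \geq k$ emerges naturally as the smallest value for which a single $w_r$ can be absorbed into one height-$n$ block.
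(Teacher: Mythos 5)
Your proof is correct and takes essentially the same route as the paper's: both hinge on reading the hypothesis $\delta_P(t)=1+(m-1)t^k$ as saying that the non-origin lattice points of the half-open fundamental parallelepiped of $v_0,\ldots,v_d$ all sit at height $k$, and both then obtain the threshold $n\geq k$ by the same block-distributing argument (the paper phrases this through the floor/fractional decomposition $\alpha=\beta+\beta'$ and its displayed disjoint-union identity for $nP\cap\ZZ^d$; you phrase it through the unique parallelepiped decomposition in the cone, which is the same thing in semigroup language). The one deviation, which you point out yourself, is that you skip the paper's explicit coordinate formula for the $w_i$'s; that computation is indeed unnecessary for this proposition, though the paper needs those explicit coordinates later in Section~3 for the necessity part of the main theorem.
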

\begin{proof}
Let $v_0={\bf 0}$, $v_j=\eb_j$ for $1 \leq j \leq d-1$ and $v_d=\sum_{i=1}^{k-1}a_i\eb_i+\sum_{j=k}^{d-1}(m-a_{d-j})\eb_j+m\eb_d$. 
We define $w_i$ by setting 
\begin{align*}
\left(\cfrac{\overline{i(m-a_1)}+ia_1}{m},\ldots,\cfrac{\overline{i(m-a_{k-1})}+ia_{k-1}}{m},
\cfrac{\overline{ia_{k-1}}+i(m-a_{k-1})}{m},\ldots,\cfrac{\overline{ia_1}+i(m-a_1)}{m},i\right)\end{align*}
for each $1 \leq i \leq m-1$, where $\overline{\ell}$ denotes the remainder of $\ell$ divided by $m$. 
Then we can see that 
\begin{align*}
w_i=\frac{m-i}{m}v_0+\sum_{p=1}^{k-1}\frac{\overline{i(m-a_p)}}{m}v_p+\sum_{q=k}^{d-1}\frac{\overline{ia_{d-q}}}{m}v_q+\frac{i}{m}v_d
\end{align*}
and $\displaystyle \sum_{j=1}^{k-1}\left(\frac{\overline{i(m-a_j)}}{m}+\frac{\overline{ia_j}}{m}\right)+\frac{m-i}{m}+\frac{i}{m}=k$, 
which says that $w_i \in kP \cap \ZZ^d$ for each $i$. 

Now, we see that 
\begin{align}\label{kp}
nP \cap \ZZ^d=(\underbrace{P \cap \ZZ^d + \cdots +P \cap \ZZ^d}_n) \sqcup 
(\underbrace{P \cap \ZZ^d + \cdots +P \cap \ZZ^d}_{n-k} + \{w_i\}_{1 \leq i \leq m-1}) \end{align}
for every $n \geq k$. In fact, for any $\alpha \in nP \cap \ZZ^d$, we can write $\alpha=\sum_{i=0}^dr_iv_i$, 
where $r_i \geq 0$ for each $i$ and $\sum_{i=0}^d r_i = n$. 
Let $\beta=\sum_{i=0}^d \lfloor r_i \rfloor v_i$ and $\beta'=\sum_{i=0}^d (r_i-\lfloor r_i \rfloor) v_i$. 
Note that $0 \leq r_i - \lfloor r_i \rfloor <1$. 
When $\beta'={\bf 0}$, one has $\alpha=\beta \in \underbrace{P \cap \ZZ^d + \cdots + P \cap \ZZ^d}_n$. 
Assume $\beta' \neq {\bf 0}$. Now, it is well-known that 
the $\delta$-polynomial of an integral simplex can be computed as follows: 
for an integral simplex $P \subset \RR^d$ of dimension $d$ with its $\delta$-polynomial $\sum_{i=0}^d \delta_it^i$, 
we have $\delta_j=|\{ \alpha \in \ZZ^d \;|\; \alpha = \sum_{i=0}^d s_iv_i, 0 \leq s_i < 1, \sum_{i=0}^d s_i=j\}|$ for each $0 \leq j \leq d$. 
(Consult, e.g., \cite[Proposition 27.7]{HibiBook}.) 
In our case, since $\delta_P(t)=1+(m-1)t^k$, we see the equality 
\begin{align}\label{equality}
\left\{\alpha \in \ZZ^d \;\bigg\vert\; \alpha = \sum_{i=0}^d s_iv_i, 0 \leq s_i < 1, \sum_{i=0}^d s_i \in \ZZ_{>0}\right\}=\{w_1,\ldots,w_{m-1}\}.
\end{align}
Hence, we obtain $\beta'$ belongs to $\{w_1,\ldots,w_{m-1}\}$ and 
$$\alpha=\beta+\beta' \in \underbrace{P \cap \ZZ^d + \cdots +P \cap \ZZ^d}_{n-k} + \{w_i\}_{1 \leq i \leq m-1}.$$ 

By \eqref{kp}, we can discuss as follows: 
\begin{itemize}
\item When $n \geq k$, let $\gamma \in \ell(nP) \cap \ZZ^d$ for $\ell \geq 1$. Since $\ell n \geq n \geq k$, 
it follows from \eqref{kp} that there exist $\gamma^{(1)},\ldots,\gamma^{(\ell)}$ belonging to $nP \cap \ZZ^d$ such that 
$\gamma=\gamma^{(1)}+\cdots+\gamma^{(\ell)}$. This means that $nP$ has IDP. 
\item When $n<k$, let $\ell'$ be a minimum positive integer with $\ell' n \geq k$. 
Then $w_i+(\ell'n-k)v_0 \in \ell' n P \cap \ZZ^d$ for each $i$, 
while $w_i+(\ell'n-k)v_0$ cannot be written as a sum of $\ell'$ elements in $n P \cap \ZZ^d$ because we have 
$w_i \not\in (k'P \cap \ZZ^d) + ((k-k')P \cap \ZZ^d)$ for any $1 \leq k' \leq k-1$, which follows from \eqref{equality}, 
and $w_i \in kP \cap \ZZ^d$. 
This means that $nP$ does not have IDP. 
\end{itemize}
Therefore, we conclude that $nP$ has IDP if and only if $n \geq k$. 
\end{proof}

From \eqref{kp}, we obtain the following which we will use later: 
\begin{align*}
kP\cap \ZZ^d=(\underbrace{P\cap\ZZ^d+\cdots+P\cap\ZZ^d}_{k})\sqcup\left\{w_i\right\}_{1 \leq i \leq m-1}.
\end{align*}
Moreover, from this equation we can also see that the affine lattice generated by $kP \cap \ZZ^d$ becomes $\ZZ^d$. 
In fact, since ${\bf 0}=kv_0 \in kP \cap \ZZ^d$, ${\bf e}_i=(k-1)v_0+v_i \in kP \cap \ZZ^d$ for each $1 \leq i \leq d-1$ and 
$w_1=\sum_{i=1}^d \eb_i \in kP \cap \ZZ^d$, we observe that the lattice points $\eb_1,\ldots,\eb_{d-1}, w_1$ generate $\ZZ^d$.

\begin{Remark}
In \cite{CHHH}, several invariants concerning the dilation of integral convex polytopes are studied. 
For the case $P=P(a_1,\ldots,a_{k-1},m)$, we can show that $\mu_\text{va}(P) =\mu_\text{Ehr}(P)= k$. 
Hence, by \cite[Theorem 1.1]{CHHH}, we obtain that all invariants defined there are equal to $k$. 
\end{Remark}


\bigskip
\section{Proof of Theorem \ref{mainthm} : the necessity}\label{nec}

This section is devoted to giving a proof of the necessity of Theorem \ref{mainthm}. We prove the following: 
\begin{Proposition}\label{prop:nec}Given an integer $k \geq 2$, let $d=2k-1$, let $m \geq 2$ be an integer and 
let $a_i,b_i$ be positive integers with $a_i+b_i=m$ and $(a_i,m)=1$ for each $1 \leq i \leq k$. 
Let $$P=\con\left(\left\{\mathbf{0},\mathbf{e}_1,\ldots,\mathbf{e}_{d-1},
\sum_{i=1}^{k-1}a_i\eb_i+\sum_{j=k}^{d-1}b_{d-j}\eb_j+m\eb_d\right\}\right) \subset \RR^d.$$ 
Assume that there is $j$ such that $2 \leq a_j \leq m-2$. Then $kP$ does not have a regular unimodular triangulation.
\end{Proposition}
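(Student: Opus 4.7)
The plan is to argue by contradiction using the Sturmfels correspondence \cite{Stur}: $kP$ admits a regular unimodular triangulation if and only if the toric ideal $I_{kP}$ of the lattice-point set $\Lambda := kP \cap \ZZ^d$ has a squarefree initial ideal under some monomial order $<$. Assuming such a triangulation exists, fix the corresponding $<$ and use Proposition~\ref{prop:IDP} to split $\Lambda$ as the disjoint union of the vertex-sums $\bigl\{\sum_{\ell=0}^d c_\ell v_\ell : c_\ell \in \ZZ_{\geq 0},\ \sum c_\ell = k\bigr\}$ and the $m-1$ extra points $\{w_1, \ldots, w_{m-1}\}$. Direct computation gives $w_1 = (1,1,\ldots,1)$, $w_{m-1} = (a_1,\ldots,a_{k-1},m-a_{k-1},\ldots,m-a_1,m-1)$, and the symmetry $w_i + w_{m-i} = \sum_{\ell=0}^d v_\ell$ for every $i$, producing an abundant family of degree-$2$ binomials in $I_{kP}$ that constrain $<$.

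The heart of the argument is to exhibit a binomial in $I_{kP}$ whose leading term under $<$ is forced to be a pure power $x_\mu^r$ for some $\mu \in \Lambda$ and some $r \geq 2$. Since $I_{kP}$ is a prime ideal containing no nonzero monomials, the variable $x_\mu$ cannot lie in $\ini_<(I_{kP})$; hence $x_\mu^r \in \ini_<(I_{kP})$ would force $\ini_<(I_{kP}) \subsetneq \sqrt{\ini_<(I_{kP})}$, contradicting squarefreeness. I plan to take $\mu$ to be $w_1$ or $w_{a_j}$ (or a related $w_i$ distinguished by the hypothesis), enumerate every decomposition of $r\mu$ into $r$ lattice points of $\Lambda$, and use $2 \leq a_j \leq m-2$ to argue that each alternative decomposition is given strictly smaller $<$-weight than $x_\mu^r$ by any admissible order, so that $x_\mu^r$ is the only possible leading term.

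The main obstacle will be locating the correct pair $(\mu, r)$ and executing the decomposition analysis. The contrast with the case $a_j \in \{1, m-1\}$ is instructive: there, the $w_i$'s become collinear on the segment from $v_0 + v_1 + \cdots + v_{k-1}$ to $v_k + \cdots + v_d$, yielding a wealth of decompositions of any $r\mu$ that permit squarefree leading terms and underpin the positive construction in the opposite direction of the main theorem. When $a_j \in [2, m-2]$, this collinearity fails, the set of available decompositions shrinks, and I expect the remaining alternatives to all be dominated by the pure power $x_\mu^r$, yielding the obstruction. The construction should parallel the three-dimensional argument of Santos--Ziegler \cite{SZ14}, with the multi-index structure of $P(a_1,\ldots,a_{k-1},m)$ exploited through the coordinates indexed by $j$, $d-j$, and $d$, where the hypothesis on $a_j$ localizes the obstruction.
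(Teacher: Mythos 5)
Your framing via the Sturmfels correspondence is exactly what the paper uses, and the observations you list in the first paragraph (the disjoint decomposition of $kP\cap\ZZ^d$, the symmetry $w_i+w_{m-i}=\sum_\ell v_\ell$, and the resulting degree-$2$ binomials $y_iy_{m-i}-y_jy_{m-j}\in I$) are all correct and relevant. The trouble starts with the ``heart'' of the argument.

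The obstruction in this problem is not a forced pure power. You want to find a single lattice point $\mu$ and an exponent $r\geq 2$ such that $x_\mu^r$ is the leading term of some binomial under \emph{every} monomial order; this cannot work, because for each such binomial $x_\mu^r - h$ there will always exist some monomial order with $h$ as the leading term, and your hypothesis $2\leq a_j\leq m-2$ does not rule this out locally. What actually happens is more global. The paper produces, for each $2\leq i\leq m-2$, a cubic binomial $x_Jy_{i-1}y_{i+1}-x_{J'}y_i^2\in I$ together with a companion binomial, and shows (Lemma~\ref{lem:yoshida3}) that if the non-squarefree monomial $x_{J'}y_i^2$ were the initial term, squarefreeness of $\ini_<(I)$ would force a \emph{quadratic} squarefree divisor of $x_{J'}y_i^2$ into $\ini_<(I)$, and Lemma~\ref{lem:yoshida3} shows no such quadratic binomial exists. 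This does not yield a non-squarefree initial term; it only yields the \emph{inequality} $x_{J'}y_i^2<x_Jy_{i-1}y_{i+1}$, and thus $y_iy_{m-i}<y_{i-1}y_{m-(i-1)}$ or $y_iy_{m-i}<y_{i+1}y_{m-(i+1)}$. A second family of binomials coming from the modular inverse $a'$ of $a_j$ (Lemma~\ref{lem:yoshida2}) does the analogous thing for $i=1$, linking $y_1y_{m-1}$ to some other $y_jy_{m-j}$. The conclusion is that for \emph{every} $i$ there is a $j$ with $y_iy_{m-i}<y_jy_{m-j}$, so the finite set $\{y_iy_{m-i}\}$ has no $<$-maximum---an impossibility for a monomial order. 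This cyclic/no-maximum argument is the step your proposal lacks, and it cannot be replaced by a local pure-power argument.

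A second, related gap: your plan speaks of all alternative decompositions of $r\mu$ having ``strictly smaller $<$-weight by any admissible order,'' but $<$ is an arbitrary monomial order arising from a regularity certificate, and there is no way to control the sign of all such comparisons a priori. The only reliable way to extract unconditional inequalities from the squarefreeness hypothesis is the step via Lemma~\ref{lem:yoshida3}: rule out \emph{low-degree} squarefree initial monomials by a uniqueness-of-decomposition result, and use that to pin down the leading term of each carefully chosen cubic binomial. Once you have those forced inequalities, the contradiction comes from their global incompatibility, not from any single binomial.
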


Similar to the previous section, 
let $v_0={\bf 0}$, $v_j=\eb_j$ for $1 \leq j \leq d-1$, $v_d=\sum_{i=1}^{k-1}a_i\eb_i+\sum_{j=k}^{d-1}b_{d-j}\eb_j+m\eb_d$ 
and let
                \[w_i=\left(\cfrac{\overline{ib_1}+ia_1}{m},\ldots,\cfrac{\overline{ib_{k-1}}+ia_{k-1}}{m},
                \cfrac{\overline{ia_{k-1}}+ib_{k-1}}{m},\ldots,\cfrac{\overline{ia_1}+ib_1}{m},i\right) \in \RR^d\] 
for each $1 \leq i \leq m-1$. Let $A=\underbrace{P \cap \ZZ^d+\cdots+P \cap \ZZ^d}_k$. 
Then $$kP \cap \ZZ^d=A \sqcup \{w_i\}_{1 \leq i \leq k}.$$ 

For the proof of Proposition \ref{prop:nec}, we prepare three lemmas (Lemma \ref{lem:yoshida1}, Lemma \ref{lem:yoshida2} and Lemma \ref{lem:yoshida3}). 
In the proofs of those lemmas, we will use the following notation. 
For $x \in \RR^d$, let $p_j(x)$ denote the $j$-$th$ coordinate of $x$. 
Remark that for $1 \leq i \leq m-1$ and $1 \leq j \leq k-1$, we have $p_j(w_i)+p_{2k-1-j}(w_i)=i+1$                   
\begin{Lemma}\label{lem:yoshida1}
For any $2 \leq i \leq m-2$, there exist $v,v^\prime$ $\in A$ which satisfy the equalities 
$w_{i-1}+w_{i+1}-2w_i=v-v^\prime$ and $w_{m-(i-1)}+w_{m-(i+1)}-2w_{m-i}=v^\prime-v$. 
\end{Lemma}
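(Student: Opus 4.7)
The plan is to compute $w_{i-1}+w_{i+1}-2w_i$ coordinate by coordinate, show it is a $\{-1,0,1\}$-vector with a strong symmetry, and then exhibit $v,v'$ as sums of $v_0$'s and some $\eb_j$'s. First I would examine a typical coordinate: for $1 \leq j \leq k-1$, direct substitution gives
\[
p_j(w_{i-1}+w_{i+1}-2w_i)=\frac{\overline{(i-1)b_j}+\overline{(i+1)b_j}-2\overline{ib_j}}{m},
\]
because the $ia_j$-terms cancel. Since $(i-1)b_j+(i+1)b_j-2ib_j=0$ in $\ZZ$ and each residue differs from its actual value by a multiple of $m$, the numerator is a multiple of $m$. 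Using $1\leq i-1,i,i+1\leq m-1$ together with $(b_j,m)=1$ (coming from $b_j=m-a_j$ and $(a_j,m)=1$), each residue lies in $\{1,\dots,m-1\}$, so the numerator lies in $(-2m,2m)$, forcing $p_j(w_{i-1}+w_{i+1}-2w_i)\in\{-1,0,1\}$. The same argument handles coordinates $k,\dots,d-1$, and the last coordinate is trivially $0$.

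Next I would exploit $b_j=m-a_j$ to pair up coordinates. For $1\leq s\leq m-1$, the relation $\overline{sb_j}=m-\overline{sa_j}$ yields
\[
\overline{(i-1)b_j}+\overline{(i+1)b_j}-2\overline{ib_j}=-\bigl(\overline{(i-1)a_j}+\overline{(i+1)a_j}-2\overline{ia_j}\bigr),
\]
so $p_j(w_{i-1}+w_{i+1}-2w_i)=-p_{d-j}(w_{i-1}+w_{i+1}-2w_i)$ for every $1\leq j\leq k-1$. In particular, writing $S_+$ and $S_-$ for the sets of coordinates where $w_{i-1}+w_{i+1}-2w_i$ equals $+1$ and $-1$ respectively, the pairing shows that $S_+$ and $S_-$ each contain at most one element from every pair $\{j,d-j\}$, so $|S_+|,|S_-|\leq k-1$.

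I would then define
\[
v=(k-|S_+|)v_0+\sum_{j\in S_+}v_j,\qquad v'=(k-|S_-|)v_0+\sum_{j\in S_-}v_j.
\]
Both are sums of exactly $k$ vertices of $P$ (using the bound just obtained to ensure nonnegative multiplicity on $v_0$), so $v,v'\in A$. Since $v_0=\mathbf 0$ and $v_j=\eb_j$ for $1\leq j\leq d-1$, we have $v-v'=\sum_{j\in S_+}\eb_j-\sum_{j\in S_-}\eb_j=w_{i-1}+w_{i+1}-2w_i$, establishing the first equality.

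For the second equality I would redo the computation with $m-s$ in place of $s$. The relations $\overline{(m-s)b_j}=m-\overline{sb_j}$ and $p_d(w_{m-s})=m-s$ give $p_j(w_{m-s})=1+a_j-p_j(w_s)$ (and analogously on the other coordinates), so that
\[
w_{m-(i-1)}+w_{m-(i+1)}-2w_{m-i}=-(w_{i-1}+w_{i+1}-2w_i)=v'-v,
\]
completing the argument. The only step that needs care is the bookkeeping in paragraph one (verifying the numerator is indeed a multiple of $m$ bounded in absolute value by $m$), but this is routine once the coordinate formulas are written out; no real obstacle is expected.
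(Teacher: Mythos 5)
Your argument correctly establishes the \emph{existence} of $v,v'\in A$ with $v-v'=w_{i-1}+w_{i+1}-2w_i$ and $v'-v=w_{m-(i-1)}+w_{m-(i+1)}-2w_{m-i}$: the coordinate computation showing each entry of $w^*:=w_{i-1}+w_{i+1}-2w_i$ lies in $\{-1,0,1\}$ with $p_j(w^*)+p_{d-j}(w^*)=0$, the bound $|S_+|=|S_-|\le k-1$, and the negation identity for $w_{m-\bullet}$ are all sound, and they run essentially parallel to the paper's computation.

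However, your construction of $v,v'$ is genuinely different from the paper's, and the difference matters downstream. The paper does not take the minimal decomposition $v=\sum_{j\in S_+}v_j+(k-|S_+|)v_0$, $v'=\sum_{j\in S_-}v_j+(k-|S_-|)v_0$. Instead it fixes a coordinate with $p_1(w^*)=1$ and \emph{inflates} both $v$ and $v'$ by one extra unit there: $p_1(v)=2$, $p_1(v')=1$, and $p_{2k-2}(v)=p_{2k-2}(v')=0$. The key effect is that $p_1(v)+p_{2k-2}(v)=p_1(v')+p_{2k-2}(v')=2$, whereas for your minimal choice this sum equals $1$. That extra unit is exactly what drives the contradiction in Lemma~\ref{lem:yoshida3}, whose statement refers to ``$v,v'\in A$ given in Lemma~\ref{lem:yoshida1}'' and whose proof computes $p_1(v+w_i)+p_{2k-2}(v+w_i)=i+3$ versus $i+2$ for any competing decomposition $w_{i_1}+w_{i_2}$. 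With your $v,v'$, this count gives $i+2$ on both sides and the contradiction evaporates; indeed Lemma~\ref{lem:yoshida3} is actually \emph{false} for your $v$. Concretely, take $k=2$, $m=5$, $a_1=2$, $i=2$: then $w^*=(1,-1,0)$, your construction yields $v=\eb_1$, and $v+w_2=(2,2,2)=w_1+w_1$ with $w_1\neq v$, breaking the lemma.

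So: as a blind proof of the isolated existence statement, your argument is correct and, up to the last step, the same as the paper's. But since the paper's Lemma~\ref{lem:yoshida3} (and hence Proposition~\ref{prop:nec}) depends on the \emph{specific} $v,v'$ produced here, you should follow the paper and build $v,v'$ with the extra unit on a coordinate where $w^*$ is $+1$, so that $p_j(v)+p_{d-j}(v)=p_j(v')+p_{d-j}(v')=2$ in that coordinate pair. The bound $|S_+|\le k-1$ still guarantees $\sum_j p_j(v),\sum_j p_j(v')\le k$, so the inflated $v,v'$ remain in $A$.
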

\begin{proof}
For simplicity, we denote $w^*= w_{i-1}+w_{i+1}-2w_i$. If  $w^*=\mathbf{0}$, then the assertion is obvious, so we suppose $w^*\neq\mathbf{0}$. 

It follows from an easy calculation that                 
\begin{align*}
p_j(w^*)=\begin{cases}
\cfrac{\overline{(i-1)b_j}+\overline{(i+1)b_j}-2\overline{ib_j}}{m}, \; &\textnormal{if } 1 \leq j \leq k-1, \\
\cfrac{\overline{(i-1)a_{2k-j-1}}+\overline{(i+1)a_{2k-j-1}}-2\overline{ia_{2k-j-1}}}{m},  &\textnormal{if } k \leq j \leq 2k-2, \\
0, &\textnormal{if } j = 2k-1. 
\end{cases}\end{align*} 
Hence we can see that $-1 \leq  p_j(w^*) \leq 1$ and $p_j(w^*)+p_{2k-1-j}(w^*)=0$ for $1 \leq j \leq 2k-2$. 
Without loss of generality, we may assume $p_1(w^*)=1$ by $w^*\neq\mathbf{0}$. We define $v,v^\prime \in \RR^d$ as follows: 
\begin{align*}
p_j(v)&=\begin{cases}
2, \; &\text{if }  j=1, \\
1, &\text{if }    j\geq 2  \text{ and } p_j(w^*)=1,  \\
0, &\text{if }    j\geq 2  \text{ and }  p_j(w^*)=-1,0, \\
\end{cases}
\end{align*} and \begin{align*}
p_j(v^\prime)&=\begin{cases}
1, \; &\textnormal{if } j=1, \\
1, &\text{if } j\geq 2 \text{ and } p_j(w^*)=-1, \\
0, &\textnormal{if } j\geq 2 \text{ and } p_j(w^*)=1,0.
\end{cases}
\end{align*}
Then we can verify that $w_{i-1}+w_{i+1}-2w_i=v-v^\prime$. Since $p_j(w^*)+p_{2k-1-j}(w^*)=0$ for $1 \leq j \leq 2k-2$, 
one has $\sum\limits_{j=1}^{2k-1} p_j(v), \sum\limits_{j=1}^{2k-1} p_j(v^\prime) \leq k$. Moreover, 
$p_{2k-1}(v)=p_{2k-1}(v^\prime)=0$. Thus we know that $v$ and $v^\prime$ are contained in $A$. 
Since we see that $\overline{(m-i)b_j}+\overline{ib_j}=\overline{(m-i)a_j}+\overline{ia_j}=m$ 
for $1 \leq i \leq m-1$ and $1 \leq j \leq k-1$, we obtain $ p_j(w_{i-1}+w_{i+1}-2w_i)=- p_j(w_{m-(i-1)}+w_{m-(i+1)}-2w_{m-i})$.
Hence we have that $w_{m-(i-1)}+w_{m-(i+1)}-2w_{m-i}=v^\prime-v$. 
\end{proof}
\begin{Lemma}\label{lem:yoshida2}
Let $2 \leq a \leq m-2$ be an integer and suppose $\overline{aa^\prime}=1$, where $2 \leq a^\prime \leq m-2$. 
Then there exist $u,u^\prime \in A$ which satisfy
$w_{\overline{(a-1)a^\prime}}+w_{\overline{(a+1)a^\prime}}-2w_1=u-u^\prime$ and 
$w_{\overline{((m-1)a-1)a^\prime}}+w_{\overline{((m-1)a+1)a^\prime}}-2w_{m-1}=u^\prime-u$. 
\end{Lemma}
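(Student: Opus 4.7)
The plan is to construct $u, u' \in A$ explicitly for the first equation and then deduce the second equation from the first via the involution $i \mapsto m-i$. Set $\alpha = \overline{(a-1)a'}$ and $\beta = \overline{(a+1)a'}$. From $aa' \equiv 1 \pmod m$ together with $2 \leq a' \leq m-2$ one obtains $\alpha = m - a' + 1$ and $\beta = a' + 1$, so $\alpha + \beta = m + 2$. Writing $q_{i,j} = \lfloor i b_j / m \rfloor$ and using $a_j + b_j = m$, each $w_i$ can be rewritten in the convenient form
\[
w_i = (i - q_{i,1}, \ldots, i - q_{i,k-1},\, 1 + q_{i,k-1}, \ldots, 1 + q_{i,1},\, i),
\]
and in particular $w_1 = \sum_{j=1}^{d} \eb_j$ (since $q_{1,j} = 0$).

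Set $w^{**} = w_\alpha + w_\beta - 2w_1$. For $1 \leq j \leq k-1$ one computes $p_j(w^{**}) = (\alpha + \beta - 2) - (q_{\alpha,j} + q_{\beta,j}) = m - (q_{\alpha,j} + q_{\beta,j})$. The key observation is that $S_j := \overline{\alpha b_j} + \overline{\beta b_j}$ satisfies $S_j \equiv 2b_j \pmod m$ and $0 \leq S_j \leq 2m-2$, forcing $S_j \in \{2b_j - m,\, 2b_j,\, 2b_j + m\}$ (whichever values lie in the allowed range); a short case check then yields $p_j(w^{**}) = a_j + \eta_j$ with $\eta_j \in \{-1, 0, 1\}$. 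Using the identity $p_j(w_i) + p_{2k-1-j}(w_i) = i + 1$ we also get $p_{2k-1-j}(w^{**}) = b_j - \eta_j$, and $p_{2k-1}(w^{**}) = m$ is immediate.

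Let $E = \{j \in \{1, \ldots, k-1\} : \eta_j = 1\}$ and $F = \{j \in \{1, \ldots, k-1\} : \eta_j = -1\}$; these are disjoint with $|E| + |F| \leq k - 1$. Define
\begin{align*}
u &= v_d + \sum_{j \in E} \eb_j + \sum_{j \in F} \eb_{2k-1-j} + (k - 1 - |E| - |F|)\, v_0, \\
u' &= \sum_{j \in F} \eb_j + \sum_{j \in E} \eb_{2k-1-j} + (k - |E| - |F|)\, v_0.
\end{align*}
Each is a sum of exactly $k$ vertices of $P$, so $u, u' \in A$. Since $E, F \subseteq \{1, \ldots, k-1\}$ while $\{2k-1-j : j \in E \cup F\} \subseteq \{k, \ldots, 2k-2\}$, the $\eb$-summands occupy disjoint coordinate positions, and a direct coordinate-wise check confirms $u - u' = w^{**}$.

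For the second equation, use the identity $w_{m-i} = v_d + \sum_{j=1}^{d-1} \eb_j - w_i$, verified coordinate-wise from $q_{m-i, j} = b_j - 1 - q_{i,j}$ and the displayed form of $w_i$. Since $\overline{((m-1)a - 1)a'} = m - \beta$ and $\overline{((m-1)a + 1)a'} = m - \alpha$, substituting this identity into $w_{m-\beta} + w_{m-\alpha} - 2w_{m-1}$ causes the $v_d$ and $\sum \eb_j$ contributions to cancel, leaving
\[
w_{m-\beta} + w_{m-\alpha} - 2w_{m-1} = -(w_\alpha + w_\beta - 2w_1) = -w^{**} = u' - u.
\]
The principal obstacle in this plan is the case analysis pinning down $\eta_j \in \{-1, 0, 1\}$; once that bound is secured, the explicit construction of $u, u'$ and the telescoping reduction of the second equation to the first are routine.
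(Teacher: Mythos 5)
Your proof is correct and follows essentially the same strategy as the paper: compute $w^{**}$ coordinate-by-coordinate, show $w^{**}-v_d$ has entries in $\{-1,0,1\}$ on positions $1,\ldots,2k-2$ and $0$ on position $2k-1$, build $u=v_d+(\text{positive part, padded with }v_0)$ and $u'=-(\text{negative part, padded with }v_0)$, and deduce the second identity from the first by symmetry. Your version is somewhat cleaner than the paper's: the explicit formula $w_i=(i-q_{i,1},\ldots,1+q_{i,1},i)$ with $q_{i,j}=\lfloor ib_j/m\rfloor$, the $E$/$F$ set decomposition, and the telescoping identity $w_{m-i}=v_d+\sum_{j=1}^{d-1}\eb_j-w_i$ replace the paper's WLOG reductions (relabelling so that $p_1(w^{**})=a_1+1$ and $a_2=a$) and its appeal to ``a similar discussion as in Lemma~\ref{lem:yoshida1}'' with transparent, uniform computations.
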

\begin{proof}
Let $w^{**}=w_{\overline{(a-1)a^\prime}}+w_{\overline{(a+1)a^\prime}}-2w_1$. 
At first, we show that 
\begin{align*}
p_j(w^{**})=\begin{cases}
a_j-1 \text{ or } a_j \text{ or }a_j+1, &\text{if }1\leq j \leq k-1, \\
b_{2k-1-j}-1 \text{ or }b_{2k-1-j} \text{ or }b_{2k-1-j}+1, &\text{if } k\leq j \leq 2k-2, \\
m, &\textnormal{if } j=2k-1. 
\end{cases}\end{align*} 

We see that $p_{2k-1}(w^{**})=\overline{(a-1)a^\prime}+\overline{(a+1)a^\prime}-2=\overline{1-a'}+\overline{1+a^\prime}-2=m+1-a^\prime+1+a^\prime-2=m$.
Since $p_i(w^{**})+p_{2k-1-i}(w^{**})=\overline{(a-1)a^\prime}+1+\overline{(a+1)a^\prime}+1-4=m$, we consider $p_i(w^{**})$ only for $1 \leq i \leq k-1$.
For  $1 \leq i \leq k-1$, 
\begin{align*}
p_i(w^{**})&=\cfrac{\overline{\overline{(a-1)a^\prime}b_i}+\overline{(a-1)a^\prime}a_i+\overline{\overline{(a+1)a^\prime}b_i}+\overline{(a+1)a^\prime}a_i}{m}-2\\
&=\cfrac{\overline{b_i-a^\prime b_i}+\overline{b_i+a^\prime b_i}+(m+2)a_i}{m}-2. 
\end{align*}
Since $\overline{b_i-a^\prime b_i}+\overline{b_i+a^\prime b_i}=2b_i+m$ or $2b_i$ or $2b_i-m$, 
one sees that $p_i(w^{**})=a_i+1$ or $a_i$ or $a_i-1$. Note that for $i$ with $a_i=a$, we see that $p_i(w^{**})=a_i$. 

When $w^{**}=v_d$, we may set $u=v_d$ and $ u^\prime=\mathbf{0}$. Hence, without loss of generality, we assume $p_1(w^{**})=a_1+1$. 
In addition, since $2 \leq a_i=a \leq m-2$ for some $i$, we may also assume $i=2$, i.e., $a_2=a$. Then we have $p_2(w^{**})=a_2=a$. 
We define $u,u^\prime \in \RR^d$ as follows: 
\begin{align*}
p_j(u'')=\begin{cases}
2, \; &\textnormal{if }j=1, \\ 
0, \; &\textnormal{if } j=2, \\
1, \; &\textnormal{if } 3 \leq j \leq 2k-2 \text{ and }p_j(w^{**}-v_d)=1, \\
0, \; &\textnormal{if } 3 \leq j \leq 2k-2 \text{ and }p_j(w^{**}-v_d)=-1\text{ or }0, \\
0, \; &\textnormal{if } j=2k-1, 
\end{cases}
\end{align*} 
and 
\begin{align*}
p_j(u^\prime)=\begin{cases}
1, \; &\textnormal{if }j=1, \\ 
0, \; &\textnormal{if } j=2, \\
1, \; &\textnormal{if } 3 \leq j \leq 2k-2 \text{ and }p_j(w^{**}-v_d)=-1, \\
0, \; &\textnormal{if } 3 \leq j \leq 2k-2 \text{ and }p_j(w^{**}-v_d)=1\text{ or }0, \\
0, \; &\textnormal{if } j=2k-1 
\end{cases}
\end{align*} 
and take $u=v_d+u''$. From the above discussion and the equalities 
$\overline{(a-1)a^\prime}+\overline{(\overline{(m-1)a}+1)a^\prime}=\overline{(a+1)a^\prime}+\overline{(\overline{(m-1)a}-1)a^\prime}=m$,
we see that $u$ and $u^\prime$ are the desired ones by the similar discussion in the proof of Lemma \ref{lem:yoshida1}.  
\end{proof}
\begin{Lemma}\label{lem:yoshida3}
For $v,v' \in A$ given in Lemma \ref{lem:yoshida1} and $u,u' \in A$ given in Lemma \ref{lem:yoshida2}, 
we consider $h_1,h_2 \in A$. 
\begin{itemize}
\item If $v+w_i=h_1+h_2$ for some $i$, then $v=h_1$ or $v=h_2$. 
\item If $v^\prime+w_i=h_1+h_2$ for some $i$, then $v^\prime=h_1$ or $v^\prime=h_2$. 
\item If $u+w_1=h_1+h_2$, then $u=h_1$ or $u=h_2$. 
\item If $u^\prime+w_{m-1}=h_1+h_2$, then $u^\prime=h_1$ or $u^\prime =h_2$. 
\end{itemize}
\end{Lemma}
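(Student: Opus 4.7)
The plan is to establish all four bullets by showing that, in each case, the premise cannot be satisfied, so the implication holds vacuously. The tool is a simple congruence modulo $m$ in the last coordinate.

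First I will record the key invariant of $A$: since $p_d(v_j)=0$ for $0\leq j\leq d-1$ whereas $p_d(v_d)=m$, every element $h\in A$, written as $h=\sum_{j=0}^d n_j v_j$ with $n_j\in\ZZ_{\geq 0}$ and $\sum n_j = k$, satisfies $p_d(h)=n_d\cdot m$. Consequently $p_d(h_1+h_2)\equiv 0\pmod m$ for any $h_1,h_2\in A$.

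Next I will compute $p_d$ of each left-hand side. From the explicit formulas in the proofs of Lemmas \ref{lem:yoshida1} and \ref{lem:yoshida2} one reads off $p_d(v)=p_d(v')=0$ (the $j=2k-1$ case), $p_d(u')=0$, and $p_d(u)=m$ (since $u=v_d+u''$ with $p_d(u'')=0$). Combined with $p_d(w_i)=i$, this yields
\begin{align*}
p_d(v+w_i) = p_d(v'+w_i) &= i, \\
p_d(u+w_1) &= m+1, \\
p_d(u'+w_{m-1}) &= m-1.
\end{align*}
None of $i$ (for $1\leq i\leq m-1$), $m+1$, or $m-1$ is divisible by $m$. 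Combined with the preceding paragraph, this shows that none of the four equations can hold with $h_1,h_2\in A$, so each of the four implications is vacuously true.

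The only point requiring genuine verification is the claim that $p_d$ vanishes on $v,v',u'$ and equals $m$ on $u$, which one reads directly from the last-coordinate entries set explicitly to $0$ (or to $m$ in the case of $u$) in the case analyses of the preceding lemmas. There is no real obstacle; the lemma is a formal mod-$m$ consequence of how elements of $A$ decompose along the basis $v_1,\dots,v_d$ of $\RR^d$. It is worth noting that if one instead allowed $h_1,h_2$ to range over all of $kP\cap\ZZ^d$ (so that $w_j$'s are permitted), the conclusion would \emph{fail}: for instance, relations of the form $v+w_i = w_{j_1}+w_{j_2}$ with $j_1+j_2=i$ can genuinely occur, and it is precisely the restriction $h_1,h_2\in A$ that kills the premise via the $p_d$-congruence.
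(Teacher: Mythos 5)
Your argument is correct only for the literal reading of the hypothesis, and that literal reading is a slip in the paper. The congruence you use --- that the last coordinate of every element of $A$ is a multiple of $m$ --- does show that $v+w_i$, $v'+w_i$, $u+w_1$, $u'+w_{m-1}$ never lie in $A+A$, so under ``$h_1,h_2 \in A$'' the four implications hold vacuously. But the paper's own proof of this lemma proceeds by setting $h_1 = w_{i_1}$, $h_2 = w_{i_2}$, which are \emph{not} in $A$; and the application in Proposition~\ref{prop:nec} needs the conclusion for $h_1,h_2$ ranging over all of $kP\cap\ZZ^d$, since the tail $M$ of a binomial $x_{J'}y_i - M \in I$ with $\ini_<(x_{J'}y_i - M)=x_{J'}y_i$ may perfectly well be $y_{j_1}y_{j_2}$. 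So ``$\in A$'' in the statement should read ``$\in kP\cap\ZZ^d$'', and for that hypothesis the lemma has genuine content that your proof does not supply.

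Your closing remark compounds the problem: you assert that relations $v+w_i = w_{j_1}+w_{j_2}$ with $j_1+j_2=i$ ``can genuinely occur'', but ruling out exactly that possibility is the substance of the lemma. Since $p_1(w_\ell)+p_{2k-2}(w_\ell) = \ell+1$, one gets $p_1(w_{j_1}+w_{j_2})+p_{2k-2}(w_{j_1}+w_{j_2}) = j_1+j_2+2 = i+2$, whereas $p_1(v+w_i)+p_{2k-2}(v+w_i) = (2+0)+(i+1) = i+3$, because the $v$ constructed in Lemma~\ref{lem:yoshida1} has $p_1(v)=2$ and $p_{2k-2}(v)=0$. The mismatch $i+3\neq i+2$ kills the $w$-$w$ case. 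Your last-coordinate congruence is the right opening move --- it eliminates the $A$-$A$ case and forces both $h_j$ to be $w$'s --- but it is the $w$-$w$ case that carries the weight of the lemma, and you not only omitted it but explicitly (and incorrectly) conceded it.
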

\begin{proof}
We prove the first statement. The other statements are proved in the similar way. 
We assume that there exist $h_1$ and $h_2$ in $A$ satisfying $v+w_i=h_1+h_2$ for some $i$ and $v \neq h_1$ and $v \neq h_2$. 
Since $p_{2k-1}(v+w_i)=p_{2k-1}(h_1+h_2)=i$ and $2 \leq i \leq m-2$, we have $h_1=w_{i_1}$ and $h_2=w_{i_2}$, 
where $1 \leq i_1 \leq i_2 \leq m-1$. Therefore, 
$p_1(h_1+h_2)+p_{2k-2}(h_1+h_2)=p_1(h_1)+p_{2k-2}(h_1)+p_1(h_2)+p_{2k-2}(h_2)=p_{2k-1}(h_1)+1+p_{2k-1}(h_2)+1=p_{2k-1}(h_1+h_2)+2=i+2$, 
but $p_1(v+w_i)+p_{2k-2}(v+w_i)=p_1(v)+p_1(w_i)+p_{2k-2}(v)+p_{2k-2}(w_i)=i+3$, a contradiction. 
\end{proof}

Let $K$ be a field. 
Let $K[t_1^\pm, \ldots,t_d^\pm,s]$ denote the Laurent polynomial ring with $(d+1)$ variables. 
For an integral simplex $P \subset \RR^d$ in Proposition \ref{prop:nec}, 
if $\alpha =( \alpha_1,\ldots,\alpha_d) \in kP \cap \ZZ^d$, then we write $u_\alpha$ for the Laurent monomial 
$t_1^{\alpha_1}\cdots t_d^{\alpha_d} \in K[t_1^\pm, \ldots,t_d^\pm,s]$. The {\em Ehrhart ring} $K[kP]$ of $kP$ 
is the subring of $K[t_1,\ldots,t_d,s]$ generated by those monomials $u_\alpha s$ with $\alpha \in kP \cap \ZZ^d$. 
Note that this $K[kP]$ is usually called the {\em toric ring} of $kP$, but the toric ring of an integral convex polytope $Q$ 
coincides with the Ehrhart ring of $Q$ if and only if $Q$ has IDP, so we can call $K[kP]$ the Ehrhart ring. 
Let $S=K[ \left\{x_{i_1\cdots i_k}\right\}_{0 \leq i_1 \leq \cdots \leq i_k \leq d},\left\{y_j\right\}_{1 \leq j \leq m-1}]$ 
be the polynomial ring with $(\binom{d+k}{k}+m-1)$ variables with $\deg(x_{i_1\cdots i_k})=\deg(y_j)=1$. 
We define the surjective ring homomorphism 
$\pi : S \rightarrow K[kP]$ by setting $\pi(x_{i_1\cdots i_k})=u_{v_{i_1}+\cdots+v_{i_k}}s$ and $\pi(y_j)=u_{w_j}s$. 
Let $I$ denote the kernel of $\pi$ and we call $I$ the {\em toric ideal} of $kP$. 
It is known that $kP$ has a regular unimodular triangulation if and only if 
there exists a monomial order $<$ on $S$ such that the initial ideal $\ini_<(I)$ of $I$ with respect to $<$ is squarefree 
(e.g., see \cite[Corollary 8.9]{Stur}). 
In what follows, we will prove there is no such monomial order.

\begin{proof}[Proof of Proposition \ref{prop:nec}]
It follows from Lemma \ref{lem:yoshida1} that 
there exist variables $x_J$ and $x_{J'}$ of $S$ such that both $x_Jy_{i-1}y_{i+1}-x_{J'}y_i^2$ and $x_{J'}y_{m-(i-1)}y_{m-(i+1)}-x_J{y_{m-i}}^2$ 
belong to $I$ or both $y_{i-1}y_{i+1}-y_i^2$ and $y_{m-(i-1)}y_{m-(i+1)}-{y_{m-i}}^2$ belong to $I$ for each $2\leq i \leq m-2$. 
Moreover, it follows from Lemma \ref{lem:yoshida2} that there exist variables $x_L$ and $x_{L'}$ of $S$ such that 
both $x_Ly_{\overline{(a-1)a^\prime}}y_{\overline{(a+1)a^\prime}}-x_{L'}y_1^2$ and 
$x_{L'}y_{\overline{((m-1)a-1)a^\prime}}y_{\overline{((m-1)a+1)a^\prime}}-x_L{y_{m-1}}^2$ belong to $I$. 

On the contrary, suppose $kP$ has a regular unimodular triangulation, 
namely there exists a monomial order $<$ such that $\ini_<(I)$ is squarefree. 
Then, for all six binomials just appearing above, their initial monomials are the first monomials. 
In fact, for the four cubic binomials above, 
if the second monomial of one of those binomials is an initial monomial, 
since it is not squarefree but $\ini_<(I)$ is squarefree, 
the second monomial is divisible by a quadratic monomial belonging to $\ini_<(I)$. 
Hence, there exsits a binomial whose initial monomial is such quadratic monomial. However, this contradicts to Lemma \ref{lem:yoshida3}. 

Thus, we conclude that, for any monomial order $<$, one has 
$x_{J'}y_i^2 < x_Jy_{i-1}y_{i+1}$ and $x_J{y_{m-i}}^2 < x_{J'}y_{m-(i-1)}y_{m-(i+1)}$ for any $2\leq i \leq m-2$. 
Then $y_i^2{y_{m-i}}^2 < y_{i-1}y_{m-(i-1)}y_{i+1}y_{m-(i+1)}$ holds. 
Thus, we have $y_iy_{m-i} < y_{i-1}y_{m-(i-1)}$ or $ y_iy_{m-i} < y_{i+1}y_{m-(i+1)}$. Similarly, we also have 
$y_1{y_{m-1}} < y_{\overline{(a-1)a^\prime}}y_{\overline{((m-1)a+1)a^\prime}} $ or 
$y_1{y_{m-1}} < y_{\overline{(a+1)a^\prime}}y_{\overline{((m-1)a-1)a^\prime}}$,
and recall $\overline{(a-1)a^\prime}+\overline{((m-1)a+1)a^\prime}=m$
and $\overline{(a+1)a^\prime}+\overline{((m-1)a-1)a^\prime}=m$.
Thus, for any $1 \leq i \leq m-1$, there exists $1 \leq j \leq m-1$ such that $y_iy_{m-i} < y_jy_{m-j}$. This is a contradiction. 
\end{proof}

\bigskip


\section{Proof of Theorem \ref{mainthm} : the sufficiency}\label{suf}

This section is devoted to giving a proof of the sufficiency of Theorem \ref{mainthm}. We prove the following: 
\begin{Proposition}\label{prop:suf}
Given an integer $k \geq 2$, let $d=2k-1$. Let $m \geq 2$ be an integer and let 
\[P=\con\left(\left\{\mathbf{0},\mathbf{e}_1,\ldots,\mathbf{e}_{d-1}, 
\sum_{i=1}^{k-1}\eb_i+(m-1)\sum_{j=k}^{d-1}\eb_j+m\mathbf{e}_d\right\}\right) \subset \RR^d.\] 
Then $kP$ has a regular unimodular triangulation. 
\end{Proposition}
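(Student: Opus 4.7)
My plan is to prove Proposition~\ref{prop:suf} via Sturmfels' correspondence (recalled at the end of Section~\ref{nec}): it suffices to produce a monomial order $<$ on $S$ such that $\ini_<(I)$ is squarefree. Specializing the setup of Section~\ref{nec} to $a_j=1$, $b_j=m-1$, a direct computation yields
$$w_i \;=\; \eb_1+\cdots+\eb_{k-1}+i(\eb_k+\eb_{k+1}+\cdots+\eb_d),\qquad 1 \le i \le m-1,$$
so the points $w_1,\ldots,w_{m-1}$ are collinear, and the natural endpoints $w_0:=v_0+v_1+\cdots+v_{k-1}$ and $w_m:=v_d+v_k+\cdots+v_{d-1}$ of the resulting integral segment both belong to $A$. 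This yields the crucial bridging identity $w_i+w_{m-i}=w_0+w_m=v_T+v_{T'}$ valid for every partition $\{0,1,\ldots,d\}=T\sqcup T'$ with $|T|=|T'|=k$; hence the binomial $y_iy_{m-i}-x_Tx_{T'}$ lies in $I$ for every such choice of $i$, $T$, $T'$.

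For the order I take a block (elimination) order in which every $x$-variable strictly outranks every $y$-variable, refined inside the $X$-block by a Sturmfels-type sortable lex (which gives a squarefree initial ideal for the Veronese $X$-only sub-ideal, by a classical result) and inside the $Y$-block by lex $y_1>y_2>\cdots>y_{m-1}$. The binomials of $I$ fall into three families: (a)~Veronese $X$-only relations, whose initial monomials are squarefree by the classical sortable argument; (b)~Hankel-type $Y$-only relations $y_ay_b-y_{a+1}y_{b-1}$ for $b-a\ge 2$, with squarefree initial $y_ay_b$; and (c)~mixed relations, whose initial monomial under this order is always on the side with more $x$-variables. In particular, the bridging binomials from the previous paragraph have squarefree $X$-initial $x_Tx_{T'}$.

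The crux, and the main obstacle, is verifying that every minimal generator of $\ini_<(I)$ is squarefree. For each lattice point $\alpha$ of $n\cdot kP$ the $<$-largest representative $M_{\max}(\alpha)$ carries at most one $y$-variable --- namely $y_j$ with $j$ the residue of the last coordinate of $\alpha$ modulo $m$, or no $y$ if $m$ divides that coordinate --- because adding further $y$-factors strictly lowers the $X$-degree. It then remains to show that whenever the lex-maximum $x$-product in $M_{\max}(\alpha)$ fails to be squarefree, some proper squarefree divisor of $M_{\max}(\alpha)$ already appears as the initial monomial of another binomial in $I$, so that $M_{\max}(\alpha)$ is not a minimal generator. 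I plan to carry this out by induction on $n$, using the Veronese initials of family~(a) to absorb any sortable repeated $x$-factor and the mixed initials of family~(c) to absorb the remaining $x$-repetitions tied to the $w_i$-structure. This last step is precisely where the hypothesis $a_j=1$ is indispensable: the collinearity of $w_1,\ldots,w_{m-1}$ is what provides the quadratic bridging binomials of~(c), a structural feature whose absence is exactly the obstruction recorded in Lemmas~\ref{lem:yoshida1}--\ref{lem:yoshida3} that forced non-existence in Proposition~\ref{prop:nec}.
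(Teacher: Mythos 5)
Your setup is sound and matches the structural insight of the paper's proof: the specialization $a_j=1$ makes $w_1,\dots,w_{m-1}$ collinear, and your identity $w_i+w_{m-i}=v_0+v_1+\cdots+v_d=v_T+v_{T'}$ (for any balanced partition $T\sqcup T'=\{0,\dots,d\}$) is exactly what the paper packages by introducing the ``honorary'' variables $y_0=x_{{\bf u}_0}$ and $y_m=x_{{\bf u}_m}$. Your monomial order (block elimination with $x$-variables above $y$-variables, sorting on the $x$-block, lex on the $y$-block) is also essentially the order the paper uses, namely steps (i)--(iii) in its definition of $<$. Up to this point you have reconstructed the right framework.

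The gap is that the crucial verification is not carried out --- you write ``I plan to carry this out by induction on $n$'' and stop. In addition, the reduction you propose is not obviously sufficient even as a plan. The minimal generators of $\ini_<(I)$ are precisely the non-standard monomials \emph{all of whose proper divisors are standard}; there is no reason these coincide with the fiber-maxima $M_{\max}(\alpha)$. A non-squarefree monomial $M$ with $M_{\min}(\pi(M))<M<M_{\max}(\pi(M))$ could a priori be a minimal generator, and your argument never touches such $M$. (Your observation that $M_{\max}(\alpha)$ carries at most one $y$-variable is correct --- it follows from $w_a+w_b=w_{a+b}+w_0$ or $w_a+w_b=w_{a+b-m}+w_m$, which pushes any $y_ay_b$ pair toward higher $x$-degree --- but it only constrains the $M_{\max}$'s.) What the paper actually does to close this gap is exhibit an explicit finite set $G=G_{1,1}\cup G_{1,2}\cup G_{1,3}\cup\bigcup_{n=2}^k G_n$ of binomials in $I$, all with squarefree initial terms under $<$, and then invoke the Ohsugi--Hibi criterion (\cite{OH02}): it suffices to check that any two monomials $u\neq v$ not divisible by $\ini_<(G)$ have $\pi(u)\neq\pi(v)$. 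That check is the substance of the proof --- it involves analyzing the shape that such $u,v$ must take (sorted $x$-part, $y$-part supported on two consecutive indices, $\widetilde{{\bf u}_0},\widetilde{{\bf u}_m}$ not covered) and a coordinate-by-coordinate comparison --- and none of it is present in your proposal. To complete your argument you would either need to carry out that entire verification, or rigorously justify that checking only the $M_{\max}(\alpha)$'s suffices, which would require an additional argument you have not supplied.
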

The strategy of our proof is to show the existence of a monomial order $<$ 
such that the toric ideal of $kP$ has a squarefree initial ideal. 
In what follows, we work with the same notation on the toric ideal of $kP$ as those in Section \ref{nec}.

\begin{proof}[Proof of Proposition \ref{prop:suf}]
Let $v_0=\mathbf{0}$, $v_i=\mathbf{e}_i$ for $1 \leq i \leq d-1$, $v_d=\sum_{i=1}^{k-1}\eb_i+(m-1)\sum_{j=k}^{d-1}\eb_j+m\mathbf{e}_d$ 
and let $w_j=\sum_{i=1}^{k-1}\eb_i+\ell\sum_{j=k}^d\eb_j$ for $1 \leq \ell \leq m-1$. 
Then we see that $$kP\cap \ZZ^d= \{v_{i_1}+\cdots+v_{i_k} \;|\; 0 \leq i_1 \leq \cdots \leq i_k \leq d \} 
\sqcup \left\{w_j \;|\; 1 \leq j \leq m-1 \right\}.$$

Let ${\bf u}_0=01 \cdots k-1$ and ${\bf u}_m=kk+1 \cdots d$ be the sequences of indices. 
Then we let $y_0=x_{{\bf u}_0}$ and $y_m=x_{{\bf u}_m}$ and we never use $x_{{\bf u}_0}$ and $x_{{\bf u}_m}$. 
Namely, for each variable $x_{{\bf s}}=x_{i_1 \cdots i_k}$ with $0 \leq i_1 \leq \cdots \leq i_k \leq d$ appearing below, 
we implicitly assume that ${\bf s} \neq {\bf u}_0$ and ${\bf s} \neq {\bf u}_m$. 
Moreover, we recall the notion of {\em sorting}. For a sequence $\ell_1,\ldots,\ell_p$, let 
$\mathrm{sort}(\ell_1\cdots\ell_p)$ denote the permutation $\ell_{i_1},\ldots,\ell_{i_p}$ of $\ell_1,\ldots,\ell_p$ with 
$\ell_{i_1} \leq \cdots \leq \ell_{i_p}$. 
We say that a monomial $x_{{\bf s}_1} \cdots x_{{\bf s}_\ell}$ is {\em sorted} 
if $\mathrm{sort}({\bf s}_1{\bf s}_2 \ldots {\bf s}_\ell)=
s_{1,1}s_{2,1}\cdots s_{\ell,1}s_{1,2} \cdots s_{\ell,2} \cdots s_{\ell,k}$, 
where ${\bf s}_i=s_{i,1}s_{i,2}\cdots s_{i,k}$ with $0 \leq s_{i,1} \leq \cdots \leq s_{i,k} \leq m$ for each $1 \leq i \leq \ell$. 

First, we define $(k+2)$ sets $G_{1,1},G_{1,2},G_{1,3}, G_2, \ldots, G_k$ of binomials as follows: 
\begin{align*}
G_{1,1}&=\left\{x_{{\bf s}_1}x_{{\bf s}_2}-x_{{\bf s}_1'}x_{{\bf s}_2'} \;|\; 
\mathrm{sort}({\bf s}_1{\bf s}_2)=\mathrm{sort}({\bf s}_1'{\bf s}_2') \right\}, \\ 
G_{1,2}&=\left\{x_{{\bf s}_1}x_{{\bf s}_2}-x_{{\bf s}_1'}y_p \;|\; p \in \{0,m\}, \;
\mathrm{sort}({\bf s}_1{\bf s}_2)=\mathrm{sort}({\bf s}_1' {\bf u}_p) \right\}, \\
G_{1,3}&=\{x_{{\bf s}_1}x_{{\bf s}_2}-y_py_q \;|\; p,q \in \{0,m\}, \; 
\mathrm{sort}({\bf s}_1{\bf s}_2)=\mathrm{sort}({\bf u}_p{\bf u}_q) \}, 
\end{align*}
where each $x_{\bf s}$ is of the form $x_{s_1 \cdots s_k}$ for some $0 \leq s_1 \leq \cdots \leq s_k \leq d$ 
and runs over all possible ${\bf s}$'s (but ${\bf s} \not\in\{{\bf u}_0,{\bf u}_m\}$), 
\begin{align*}
G_2&=\left\{y_p y_s -y_q y_r \;|\; 0\leq p \leq q \leq r\leq s \leq m , p+s=q+r \right\}, \;\text{and}\\
G_n&=\left\{ \begin{array}{l} x_{{\bf s}_1}x_{{\bf s}_2}\cdots x_{{\bf s}_n}-x_{{\bf t}_1}\cdots x_{{\bf t}_{n-1}}y_p \end{array} 
\left|
\begin{array}{l}
\bigcup_{\substack{1 \leq j \leq n \\ j \neq i}}\widetilde{{\bf s}_j}\not\supset \widetilde{{\bf u}_0}, \; 
\bigcup_{\substack{1 \leq j \leq n \\ j \neq i}}\widetilde{{\bf s}_j}\not\supset \widetilde{{\bf u}_m} \\
\text{for every } i=1,\cdots,n, \\
p \in \{0,m\}, \\
\mathrm{sort}({\bf t}_1{\bf t}_2\cdots {\bf t}_{n-1} {\bf u}_p) \\ =s_{1,1}s_{2,1}\cdots s_{n,1}s_{1,2} \cdots s_{n,2} \cdots s_{n,k}
\end{array}\right.\right\},
\end{align*}
for $3 \leq n \leq k$, where we denote $\widetilde{{\bf s}} = \left\{s_1,\ldots,s_k\right\}$ for ${\bf s}=s_1\cdots s_k$, 
and ${\bf s}_i=s_{i,1}s_{i,2}\cdots s_{i,k}$. 

Let $G=G_{1,1} \cup G_{1,2} \cup G_{1,3} \cup \bigcup_{i=2}^k G_i$.

Next, we define the monomial order on $S$ as follows: 
$\prod_{{\bf s}}x_{\bf s}\prod_ty_t < \prod_{{\bf s}'}x_{{\bf s}'}\prod_{t'}y_{t'} \iff$ 
\begin{enumerate}
\item[(i)] (the total degree of $\prod_{{\bf s}}x_{\bf s}$) $<$ (the total degree of $\prod_{{\bf s}'}x_{{\bf s}'}$), or 
\item[(ii)] (the total degree of $\prod_{{\bf s}}x_{\bf s}$) $=$ (the total degree of $\prod_{{\bf s}'}x_{{\bf s}'}$) 
and $\prod_{{\bf s}}x_{\bf s} < \prod_{{\bf s}'}x_{{\bf s}'}$ with respect to a sorting order (see \cite[Section 14]{Stur}), or 
\item[(iii)] $\prod_{\bf s}x_{\bf s} = \prod_{{\bf s}'}x_{{\bf s}'}$ and $\prod_ty_t < \prod_{t'}y_{t'}$ with respect to the lexicographic order 
induced by a ordering of variables $y_m<\cdots<y_0$. 
\end{enumerate}
We show that the initial monomial of each binomial in $G$ is squarefree. 
\begin{itemize}
\item On each binomial in $G_{1,1},G_{1,2},G_{1,3}$: 
by the property of a sorting order (the definition (ii) of the monomial order $<$), 
we know $\ini_<(x_{{\bf s}_1}x_{{\bf s}_2}-x_{{\bf s}_1'}x_{{\bf s}_2'})$ is squarefree. 
Regrading $x_{{\bf s}_1}x_{{\bf s}_2}-x_{{\bf s}_1'}y_p$, 
its initial monomial should be the first one by the definition (i) of $<$. 
If ${\bf s}_1={\bf s}_2$, i.e., the initial monomial is of the form $x_{\bf s}^2$ for some ${\bf s}=s_1\cdots s_k$, then
$\mathrm{sort}({\bf s}_1'{\bf u}_p)=s_1s_1 \cdots s_ks_k$, so ${\bf s}_1'$ should be also ${\bf u}_p$, a contradiction. 
Similarly, the initial monomial of $x_{{\bf s}_1}x_{{\bf s}_2}-y_py_q$ is also the first one and squarefree. 
\item On each binomial in $G_2$: by the definition (iii) of $<$, we easily see that the initial monomial is the first one and squarefree. 
\item On each binomial in $G_n$: by the definition (i) of $<$, we see that the initial monomial is the first one. 
Moreover, by definition of each binomial in $G_n$, since $\bigcup_{j=1}^n \widetilde{{\bf s}_j}\supset\widetilde{{\bf u}_0}$ 
(resp. $\bigcup_{j=1}^n \widetilde{{\bf s}_j'}\supset\widetilde{{\bf u}_m}$) 
but $\bigcup_{\substack{1 \leq j \leq n \\ j \neq i}}\widetilde{{\bf s}_j}\not\supset \widetilde{{\bf u}_0}$, 
(resp. $\bigcup_{\substack{1 \leq j \leq n \\ j \neq i}}\widetilde{{\bf s}_j'}\not\supset \widetilde{{\bf u}_m}$), 
we also see that the first monomial is squarefree. 
\end{itemize}

Our goal is to prove that $G$ forms a Gr\"{o}bner basis of $I$ with respect to the monomial order $<$ defined above. 
It is easy to see that $G \subset I$. It follows from \cite{OH02} that, in order to prove that $G$ is a Gr\"{o}bner basis of $I$, 
we may prove the following assertion: 
\begin{center}
If $u$ and $v$ are monomials belonging to $S$ with $u \neq v$ \\
such that $u \notin \ini_{<}(G)$ and $v \notin \ini_{<}(G)$, then $\pi(u) \neq \pi(v) $, 
\end{center}
where $\ini_<(G)=(\ini_<(g) \;|\; g \in G)$. 

Let $u,v\in S$ be monomials such that $u \notin \ini_{<}(G)$ and $v \notin \ini_{<}(G)$. 
Since each of $u$ and $v$ is not divisible by the initial monomials in $G$, those must be of the forms 
\[u=x_{{\bf s}_1}x_{{\bf s}_2}\cdots x_{{\bf s}_\ell}y_i^{c_1}{y_{i+1}}^{c_2},\; 
v=x_{{\bf s}_1'} x_{{\bf s}_2'} \cdots x_{{\bf s}_{\ell'}'}{y_{i'}}^{c_1'}{y_{{i'}+1}}^{{c_2}'},\]
where 
\begin{itemize}
\item $0 \leq i,i' \leq m-1$, $c_1,c_2,c_1',c_2' \geq 0$ 
(note that any monomial $y_iy_{i+\epsilon}$ with $\epsilon \geq 2$ can be divisible by the initial monomial in $G_2$); 
\item $x_{{\bf s}_1}x_{{\bf s}_2}\ldots x_{{\bf s}_\ell}$ and $x_{{\bf s}_1'}x_{{\bf s}_2'} \ldots x_{{\bf s}_{\ell'}'}$ are sorted 
(otherwise, it follows from the property of the sorting order that 
$x_{{\bf s}_1}x_{{\bf s}_2}\ldots x_{{\bf s}_\ell}$ or $x_{{\bf s}_1'}x_{{\bf s}_2'}\ldots x_{{\bf s}_{\ell'}'}$ 
is divisible by an initial monomial of $G_{1,1} \cup G_{1,2} \cup G_{1,3}$);
\item $\bigcup_{j=1}^\ell\widetilde{{\bf s}_j} \not\supset \widetilde{{\bf u}_0}$, 
$\bigcup_{j=1}^{\ell'}\widetilde{{\bf s}_j'} \not\supset \widetilde{{\bf u}_0}$ and 
$\bigcup_{j=1}^\ell\widetilde{{\bf s}_j} \not\supset \widetilde{{\bf u}_m}$, 
$\bigcup_{j=1}^{\ell'}\widetilde{{\bf s}_j'} \not\supset \widetilde{{\bf u}_m}$ 
(otherwise, we can see that $x_{{\bf s}_1} x_{{\bf s}_2}\cdots x_{{\bf s}_{\ell}}$ and $x_{{\bf s}_1'} x_{{\bf s}_2'}\cdots x_{{\bf s}_{\ell'}'}$ 
are divisible by the initial monomial in $G_{1,2},G_{1,3}$ or $G_n$ for some $3 \leq n \leq k$). 
\end{itemize}
Suppose $\pi(u)=\pi(v)$. In what follows, we will show that $u=v$. 

Let $\pi(x_{{\bf s}_i})=u_{\alpha_i}s$ and $\pi(x_{{\bf s}_i'})=u_{{\alpha_i}'}s$.
Since 
$$\pi(x_{{\bf s}_1}x_{{\bf s}_2}\cdots x_{{\bf s}_\ell})\pi(y_i^{c_1}{y_{i+1}}^{c_2})
=\pi(x_{{\bf s}_1'} x_{{\bf s}_2'} \cdots x_{{\bf s}_{\ell'}'})\pi(y_{i'}^{c_1'}y_{i'+1}^{c_2'})$$
and 
$\pi(y_j)=t_1 \cdots t_{k-1}t_k^j \cdots t_d^js$ for each $j$, we can write 
\begin{align}\label{siki}
(\alpha_1+\cdots+\alpha_\ell)-(\alpha_1'+\cdots+\alpha_{\ell'}')
=(\underbrace{a,\cdots,a}_{k-1},\underbrace{b,\cdots,b}_k) \in \ZZ^d, 
\end{align}
where $\alpha_j=v_{s_{j,1}}+\cdots+v_{s_{j,k}}$ and $\alpha_j'=v_{s_{j,1}'}+\cdots+v_{s_{j,k}'}$ for each $j$, 
$a=c_1'+c_2'-c_1-c_2$ and $b=c_1'i'+c_2'(i'+1)-(c_1i+c_2(i+1))$. 

For each $k \leq j \leq d$, let 
$$e_j=\sum_{i=1}^\ell |\{r \;|\; s_{i,r}=j, 1 \leq r \leq k\}|-\sum_{i=1}^{\ell'}|\{ r \;|\; s_{i,r}'=j, 1 \leq r \leq k\}|.$$ 
Then the $d$-th coordinate of the left-hand side of \eqref{siki} is equal to $me_d$ 
by the form of each of $v_0,\ldots,v_d$. Thus, $b=me_d$. 

Assume that $e_d$ is non-negative. Then $b \geq 0$. If there is $d$ in $\{k,k+1,\ldots,d\} \setminus \bigcup_{i=1}^{n}\widetilde{{\bf s}_i}$, 
then we can see that the $d$-th coordinate of the left-hand side of \eqref{siki} is non-positive, so we have $b=e_d=0$. 
Otherwise, take $j \in \{k,k+1,\ldots,d\} \setminus \bigcup_{i=1}^\ell \widetilde{{\bf s}_i}$ with $j \neq d$. 
Remark that $\{k,k+1,\ldots,d\} \setminus \bigcup_{i=1}^\ell \widetilde{{\bf s}_i} \neq \emptyset$ since 
$\bigcup_{i=1}^\ell \widetilde{{\bf s}_i} \not\supset \widetilde{{\bf u}_m}$. 
Then the $j$-th coordinate of the left-hand side of \eqref{siki} is equal to $(m-1)e_d+e_j$, 
where $e_j$ is non-positive by $j \not\in \bigcup_{i=1}^\ell \widetilde{{\bf s}_i}$. 
Thus $b=me_d=(m-1)e_d+e_j$ implies that $b=e_d=0$. 
Similarly, even if $e_d$ is non-positive, by repacing the roles of ${\bf s}_i$'s and ${\bf s}_i'$'s, we conclude that $b=e_d=0$. 

On the other hand, by comparing the degrees of $u$ and $v$, we can see that $\ell +c_1+c_2=\ell'+{c_1}'+{c_2}'$. 
Moreover, since ${c_1}'+{c_2}'-c_1-c_2=a$, we obtain $\ell - \ell'=a$. 
Assume that $a \geq 0$. (The case $a \leq 0$ is similar.) By \eqref{siki} and $b=e_d=0$, we see that 
\begin{align*}\mathrm{sort}({\bf s}_1 \cdots {\bf s}_\ell) =\mathrm{sort}({\bf s}_1'\cdots {\bf s}_{\ell'}'
\underbrace{0 \cdots 0}_a \underbrace{1 \cdots 1}_a\cdots\underbrace{k-1 \cdots k-1}_a).\end{align*}
From $\bigcup_{j=1}^{\ell}\widetilde{{\bf s}_j} \not\supset \widetilde{{\bf u}_0}$, we have $a=0$. 
Therefore, we see that $\ell=\ell'$. Since $v_1,\ldots,v_d$ are linearly independent 
and both ${\bf s}_1{\bf s}_2\ldots {\bf s}_\ell$ and ${\bf s}_1'{\bf s}_2'\ldots{\bf s}_{\ell'}'$ are sorted, 
we conclude that $x_{{\bf s}_1}x_{{\bf s}_2} \cdots x_{{\bf s}_\ell}=x_{{\bf s}_1'} x_{{\bf s}_2'} \cdots x_{{\bf s}_{\ell'}'}$. 

Our remaining task is to check that $y_i^{c_1}{y_{i+1}}^{c_2}={y_{i'}}^{{c_1}'}{y_{{i'}+1}}^{{c_2}'}$. 
From $a=b=0$, we know that 
$c_1'+c_2'=c_1+c_2$ and $c_1'i'+c_2'(i'+1)=c_1i+c_2(i+1)$. Without loss of generality, we may assume $i' \geq i$. 
By deleting $c_1'$, we obtain $(c_1+c_2)(i'-i)=c_2-c_2'$. 
\begin{itemize}
\item When $i'=i$, since $c_2-c_2'=0$, we obtain that $c_2=c_2'$, and thus $c_1=c_1'$. 
Hence, $y_i^{c_1}y_{i+1}^{c_2}=y_{i'}^{c_1'}y_{i'+1}^{c_2'}$. 
\item When $i'=i+1$, we have $c_1+c_2=c_2-c_2'$. Hence, we see that $c_1=-c_2'$, i.e., $c_1=c_2'=0$. 
Thus, $c_1'=c_2$. Therefore, we obtain $y_i^{c_1}y_{i+1}^{c_2}=y_{i'}^{c_1'}y_{i'+1}^{c_2'}=y_{i+1}^{c_2}.$ 
\item Assume $i' - i \geq 2$. Then we have $2(c_1+c_2) \leq c_2-c_2'$, i.e., $2c_1+c_2 +c_2' \leq 0$, 
i.e., $c_1=c_2=c_2'=0$ and $c_1'=0$. Hence, we obtain that $y_i^{c_1}y_{i+1}^{c_2}=y_{i'}^{c_1'}y_{i'+1}^{c_2'}=1$. 
\end{itemize}
Consequently, we conclude that $y_i^{c_1}{y_{i+1}}^{c_2}={y_{i'}}^{{c_1}'}{y_{{i'}+1}}^{{c_2}'}$, as required. 
\end{proof}

\bigskip


\end{document}